\newtheorem{theorem}{Theorem}[section]
\newtheorem{corollary}[theorem]{Corollary}
\newtheorem{definition}[theorem]{Definition}
\newtheorem{example}[theorem]{Example}
\newtheorem{lemma}[theorem]{Lemma}
\newtheorem{notation}[theorem]{Notation}
\newtheorem{proposition}[theorem]{Proposition}
\newtheorem{remark}[theorem]{Remark}
\newenvironment{proof}[1][Proof]{\noindent\textbf{#1.} }{\ \rule{0.5em}{0.5em}}
\begin{document}

\title{Notes on Simple Modules over Leavitt Path Algebras}
\author{Kulumani M. Rangaswamy\\University of Colorado at Colorado Springs\\Colorado Springs, Colorado 80918.\\E-mail: krangasw@uccs.edu}
\date{}
\maketitle

\begin{abstract}
Given an arbitrary graph $E$ and any field $K$, a new class of simple modules
over the Leavitt path algebra $L_{K}(E)$ is constructed by using vertices that
emit infinitely many edges in $E$. The corresponding annihilating primitive
ideals are also described. Using a Boolean subring of idempotents, bounds for
the cardinality of the set of distinct isomorphism classes of simple
$L_{K}(E)$-modules are given. We also append other information about the
algebra $L_{K}(E)$ of a finite graph $E$ over which every simple module is
finitely presented.

\end{abstract}

\section{Introduction and Preliminaries}

Leavitt path algebras were introduced in \cite{AA}, \cite{AMP} as algebraic
analogues of graph C*-algebras and as natural generalizations of Leavitt
algebras of type (1,n) built in \cite{L}. The various ring-theoretical
properties of these algebras have been actively investigated in a series of
papers (see, for e.g., \cite{AA}, \cite{AR}, \cite{AAJZ-1}, \cite{AMP},
\cite{ARS1}, \cite{R1}, \cite{R2}, \cite{T}). In contrast, the module theory
of Leavitt path algebras $L_{K}(E)$ of $\sqrt{}$arbitrary directed graphs $E$
over a field $K$ is still at its infancy. The initial organized attempt to
study $L_{K}(E)$-modules was done in \cite{AB} where, for a finite graph $E$,
the simply presented $L_{K}(E)$-modules were described in terms of finite
dimensional representations of the usual path algebras of the reverse graph
$\bar{E}$ of $E$. As an important step in the study of modules over a Leavitt
path algebra $L_{K}(E)$, the investigation of the simple $L_{K}(E)$-modules
has recently received some attention (see \cite{AR1}, \cite{AR2}, \cite{AMMS},
\cite{C}). Following the ideas of Smith \cite{S1}, Chen \cite{C} constructed
irreducible representations of $L_{K}(E)$ by using sinks and tail-equivalent
classes of infinite paths in the graph $E$. Chen's construction was expanded
in \cite{AR1} to introduce additional classes of non-isomorphic simple
$L_{K}(E)$-modules. In section 2 of this paper, we construct a new class of
simple left $L_{K}(E)$-modules induced by vertices which are infinite emitters
and at the same time streamline the process of construction of certain simple
modules introduced in \cite{AR1}. A description of the annihilating primitive
ideals of these simple modules shows that these new simple modules\ are
distinct from (i.e., not isomorphic to) any of the previously constructed
simple $L_{K}(E)$-modules in \cite{AR1}, \cite{AMMS} and \cite{C}. In section
3, we adapt the ideas of Rosenberg \cite{RO} to show that the cardinality of
any single isomorphism class of simple left $L_{K}(E)$-modules has at most the
cardinality of $L_{K}(E)$. Using a Boolean subring of commuting idempotents
induced by the paths in $L_{K}(E)$, we obtain a lower bound for the
cardinality of the set of non-isomorphic simple $L_{K}(E)$-modules. In
particular, if $L_{K}(E)$ is a countable dimensional simple algebra, then it
will have exactly $1$ or at least $2^{\aleph_{0}}$ distinct isomorphism
classes of simple modules. In section 4, we include some improvements and
simplification of the results of \cite{AAJZ-2} dealing with the structure of
Leavitt path algebras over which every simple module is finitely presented.

For the general notation, terminology and results in Leavitt path algebras, we
refer to \cite{AA}, \cite{AAS}, \cite{AMP}. We give below a short outline of
some of the needed basic concepts and results.

A (directed) graph $E=(E^{0},E^{1},r,s)$ consists of two sets $E^{0}$ and
$E^{1}$ together with maps $r,s:E^{1}\rightarrow E^{0}$. The elements of
$E^{0}$ are called \textit{vertices} and the elements of $E^{1}$
\textit{edges}. All the graphs $E$ that we consider (excepting those studied
in section 4) are arbitrary in the sense that no restriction is placed either
on the number of vertices in $E$ or on the number of edges emitted by a single
vertex. Also $K$ stands for an arbitrary field.

A vertex $v$ is called a \textit{sink} if it emits no edges and a vertex $v$
is called a \textit{regular} \textit{vertex} if it emits a non-empty finite
set of edges. An \textit{infinite emitter} is a vertex which emits infinitely
many edges. For each $e\in E^{1}$, we call $e^{\ast}$ a ghost edge. We let
$r(e^{\ast})$ denote $s(e)$, and we let $s(e^{\ast})$ denote $r(e)$. A\textit{
path} $\mu$ of length $n>0$ is a finite sequence of edges $\mu=e_{1}e_{2}%
\cdot\cdot\cdot e_{n}$ with $r(e_{i})=s(e_{i+1})$ for all $i=1,\cdot\cdot
\cdot,n-1$. In this case $\mu^{\ast}=e_{n}^{\ast}\cdot\cdot\cdot e_{2}^{\ast
}e_{1}^{\ast}$ is the corresponding ghost path. Any vertex is considered a
path of length $0$. The set of all vertices on the path $\mu$ is denoted by
$\mu^{0}$.

A path $\mu$ $=e_{1}\dots e_{n}$ in $E$ is \textit{closed} if $r(e_{n}%
)=s(e_{1})$, in which case $\mu$ is said to be based at the vertex $s(e_{1})$.
A closed path $\mu$ as above is called \textit{simple} provided it does not
pass through its base more than once, i.e., $s(e_{i})\neq s(e_{1})$ for all
$i=2,...,n$. The closed path $\mu$ is called a \textit{cycle} if it does not
pass through any of its vertices twice, that is, if $s(e_{i})\neq s(e_{j})$
for every $i\neq j$.

If there is a path from vertex $u$ to a vertex $v$, we write $u\geq v$. A
subset $D$ of vertices is said to be \textit{downward directed }\ if for any
$u,v\in D$, there exists a $w\in D$ such that $u\geq w$ and $v\geq w$. A
subset $H$ of $E^{0}$ is called \textit{hereditary} if, whenever $v\in H$ and
$w\in E^{0}$ satisfy $v\geq w$, then $w\in H$. A hereditary set is
\textit{saturated} if, for any regular vertex $v$, $r(s^{-1}(v))\subseteq H$
implies $v\in H$.

Given an arbitrary graph $E$ and a field $K$, the \textit{Leavitt path algebra
}$L_{K}(E)$ is defined to be the $K$-algebra generated by a set $\{v:v\in
E^{0}\}$ of pairwise orthogonal idempotents together with a set of variables
$\{e,e^{\ast}:e\in E^{1}\}$ which satisfy the following conditions:

(1) \ $s(e)e=e=er(e)$ for all $e\in E^{1}$.

(2) $r(e)e^{\ast}=e^{\ast}=e^{\ast}s(e)$\ for all $e\in E^{1}$.

(3) (The "CK-1 relations") For all $e,f\in E^{1}$, $e^{\ast}e=r(e)$ and
$e^{\ast}f=0$ if $e\neq f$.

(4) (The "CK-2 relations") For every regular vertex $v\in E^{0}$,
\[
v=\sum_{e\in E^{1},s(e)=v}ee^{\ast}.
\]

For any vertex $v$, the \textit{tree} of $v$ is $T(v)=\{w:v\geq w\}$. We say
there is a bifurcation at a vertex $v$, if $v$ emits more than one edge. In a
graph $E$, a vertex $v$ is called a \textit{line point} if there is no
bifurcation or a cycle based at any vertex in $T(v)$. Thus, if $v$ is a line
point, there will be a single finite or infinite line segment $\mu$ starting
at $v$ ($\mu$ could just be $v$) and any other path $\alpha$ with
$s(\alpha)=v$ will just be an initial sub-segment of $\mu$. It was shown in
\cite{AMMS} that $v$ is a line point in $E$ if and only if $vL_{K}(E)$ (and
likewise $L_{K}(E)v$) is a simple left (right) ideal.

We shall be using the following concepts and results from \cite{T}. A
\textit{breaking vertex }of a hereditary saturated subset $H$ is an infinite
emitter $w\in E^{0}\backslash H$ with the property that $1\leq|s^{-1}(w)\cap
r^{-1}(E^{0}\backslash H)|<\infty$. The set of all breaking vertices of $H$ is
denoted by $B_{H}$. For any $v\in B_{H}$, $v^{H}$ denotes the element
$v-\sum_{s(e)=v,r(e)\notin H}ee^{\ast}$. Given a hereditary saturated subset
$H$ and a subset $S\subseteq B_{H}$, $(H,S)$ is called an \textit{admissible
pair.} Given an admissible pair $(H,S)$, the ideal generated by $H\cup
\{v^{H}:v\in S\}$ is denoted by $I_{(H,S)}$. It was shown in \cite{T} that the
graded ideals of $L_{K}(E)$ are precisely the ideals of the form $I_{(H,S)}$
for some admissibile pair $(H,S)$. Moreover, $L_{K}(E)/I_{(H,S)}\cong
L_{K}(E\backslash(H,S))$. Here $E\backslash(H,S)$ is the \textit{Quotient
graph of }$E$ in which\textit{ }$(E\backslash(H,S))^{0}=(E^{0}\backslash
H)\cup\{v^{\prime}:v\in B_{H}\backslash S\}$ and $(E\backslash(H,S))^{1}%
=\{e\in E^{1}:r(e)\notin H\}\cup\{e^{\prime}:e\in E^{1},r(e)\in B_{H}%
\backslash S\}$ and $r,s$ are extended to $(E\backslash(H,S))^{0}$ by setting
$s(e^{\prime})=s(e)$ and $r(e^{\prime})=r(e)^{\prime}$.

A useful observation is that every element $a$ of $L_{K}(E)$ can be written as
$a=%
{\textstyle\sum\limits_{i=1}^{n}}
k_{i}\alpha_{i}\beta_{i}^{\ast}$, where $k_{i}\in K$, $\alpha_{i},\beta_{i}$
are paths in $E$ and $n$ is a suitable integer. Moreover, $L_{K}%
(E)=\oplus_{v\in E^{0}}L_{K}(E)v=\oplus_{v\in E^{0}}vL_{K}(E)$ (see \cite{AA}).

Even though the Leavitt path algebra $L_{K}(E)$ may not have the
multiplicative identity $1$, we shall write $L_{K}(E)(1-v)$ to denote the set
$\{x-xv:x\in L_{K}(E)\}$. If $v$ is an idempotent or a vertex, we get a direct
decomposition $L_{K}(E)=L_{K}(E)v\oplus L_{K}(E)(1-v)$.

\section{A new class of simple modules}

Let $E$ be an arbitrary graph. Throughout this section, we shall use the following notation.

For $v\in E^{0}$ define
\[
M(v)=\{w\in E^{0}:w\geq v\}\text{ and }H(v)=E^{0}\setminus M(v)=\{u\in
E^{0}:u\ngeqslant v\}.
\]

Clearly $M(v)$ is downward directed. Also, for any vertex $v$ which is a sink
or infinite emitter, the set $H(v)$ is a hereditary saturated subset of $E$.
If $v$ is a finite emitter, it might be that $H(v)$ is not saturated, and that
$v$ belongs to the saturation of $H(v)$.

For convenience in writing, we shall denote the Leavitt path algebra
$L_{K}(E)$ by $L$.

\begin{definition}
\textbf{The }$L$\textbf{-module }$S_{v\infty}:$

Suppose $v$ is an infinite emitter in $E$. Define\textbf{ }$\mathbf{S}%
_{v\infty}$ to be the $K$-vector space having as a basis the set $B=\{p:p$ a
path in $E$ with $r(p)=v\}$. Following Chen \cite{C}, we define, for each
vertex $u$ and each edge $e$ in $E$, linear transformations $P_{u},S_{e}$ and
$S_{e^{\ast}}$ on $\mathbf{S}_{v\infty}$ as follows:

For all paths $p\in B$,

$P_{u}(p)=\left\{
\begin{array}
[c]{c}%
p\text{, if }u=s(p)\\
0\text{, otherwise}%
\end{array}
\right.  $

$S_{e}(p)=\left\{
\begin{array}
[c]{c}%
ep\text{, if }r(e)=s(p)\\
0\text{, \qquad otherwise}%
\end{array}
\right.  $

$S_{e^{\ast}}(v)=0$

$S_{e^{\ast}}(p)=\left\{
\begin{array}
[c]{c}%
p^{\prime}\text{, if }p=ep^{\prime}\\
0\text{, otherwise}%
\end{array}
\right.  $
\end{definition}

Then it can be checked that the endomorphisms $\{P_{u},S_{e},S_{e^{\ast}}:u\in
E^{0},e\in E^{1}\}$ satisfy the defining relations (1) - (4) of the Leavitt
path algebra $L$. This induces an algebra homomorphism $\phi$ from $L$ to
$End_{K}(\mathbf{S}_{v\infty})$ mapping $u$ to $P_{u}$, $e$ to $S_{e}$ and
$e^{\ast}$to $S_{e^{\ast}}$. Then $\mathbf{S}_{v\infty}$ can be made a left
module over $L$ via the homomorphism $\phi$. We denote this $L$-module
operation on $\mathbf{S}_{v\infty}$ by $\cdot$.

\begin{remark}
The above construction does not work if $v$ is a regular vertex. Specifically,
the needed CK-2 relation $%
{\textstyle\sum\limits_{e\in s^{-1}(v)}}
S_{e}S_{e^{\ast}}=P_{v}$ does not hold. Because, on the one hand $(%
{\textstyle\sum\limits_{e\in s^{-1}(v)}}
S_{e}S_{e^{\ast}})(v)=0$ but on the other hand $P_{v}(v)=v\neq0$.
\end{remark}

\begin{proposition}
\label{InfiniteEmitterSimple}For each infinite emitter $v$ in $E$,
$\mathbf{S}_{v\infty}$ is a simple left module over $L_{K}(E)$.
\end{proposition}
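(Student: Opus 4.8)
The plan is to show that any nonzero submodule $N$ of $\mathbf{S}_{v\infty}$ equals the whole module by producing the basis vector $v$ (the path of length $0$ at $v$) inside $N$, and then showing that $L\cdot v = \mathbf{S}_{v\infty}$. The key structural fact I would exploit is that every basis element $p\in B$ has range $v$, so a path $p = e_1 e_2 \cdots e_n$ can be ``peeled off'' from the right by applying the ghost-edge operators: $e_n^{\ast}\cdots e_1^{\ast}\cdot p = S_{e_n^{\ast}}\cdots S_{e_1^{\ast}}(p) = v$, using the definition of $S_{e^{\ast}}$ repeatedly (each step strips the leading edge, and since $r(e_i)=s(e_{i+1})$ along the path the operators match up at each stage). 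Equivalently, writing $p^{\ast}\cdot p = v$.

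First I would take a nonzero element $x\in N$ and write it in terms of the basis: $x = \sum_{i=1}^{m} k_i p_i$ with $k_i\in K^{\times}$ and $p_i\in B$ distinct. Choose a path $p_j$ of minimal length among the $p_i$ appearing. Applying $p_j^{\ast} = S_{e_{j,n}^{\ast}}\cdots S_{e_{j,1}^{\ast}}$ to $x$: the term $k_j p_j$ maps to $k_j v$, and I must check that every other term $k_i p_i$ maps either to $0$ or to a scalar multiple of $v$; in fact $p_j^{\ast}\cdot p_i$ is nonzero only if $p_j$ is an initial subpath of $p_i$, and by minimality of $\operatorname{len}(p_j)$ this forces $p_i = p_j$, which was excluded. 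Hence $p_j^{\ast}\cdot x = k_j v \in N$, and since $k_j\neq 0$ and $N$ is a submodule, $v\in N$. (A small point: I should note $v\cdot v = P_v(v) = v$, so $v$ really is a legitimate nonzero element, and $p^{\ast}$ is an honest element of $L$ acting via $\phi$.)

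Next I would show $L\cdot v = \mathbf{S}_{v\infty}$, which since $v\in N$ gives $N = \mathbf{S}_{v\infty}$ and completes the proof. It suffices to reach every basis path $q\in B$ from $v$. Given $q = f_1 f_2\cdots f_k$ with $r(q)=v$, I build it up from the right one edge at a time: note $s(f_k) = r(f_{k-1})$ and $r(f_k) = v$, so $S_{f_k}(v) = f_k v = f_k$ (viewing $f_k$ as a length-one path in $B$, legitimate since $r(f_k)=v$); then $S_{f_{k-1}}(f_k) = f_{k-1}f_k$ since $r(f_{k-1}) = s(f_k)$; continuing, $f_1\cdot(f_2\cdots f_k) = q$. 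Thus $q = (f_1 f_2\cdots f_k)\cdot v \in L\cdot v$ for every $q\in B$, so $L\cdot v$ contains a spanning set and equals $\mathbf{S}_{v\infty}$.

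The only genuinely delicate step is the minimal-length argument in the middle paragraph: one must be careful that ``$p_j^{\ast}\cdot p_i\neq 0$ implies $p_j$ is an initial segment of $p_i$'' is applied correctly, and that after stripping $p_j$ off itself one lands exactly on $v$ rather than on some other vertex — this is where the hypothesis $r(p_i)=v$ for all basis elements is essential. Everything else is a routine unwinding of the definitions of $P_u$, $S_e$, $S_{e^{\ast}}$, together with the already-granted fact that $\phi$ is a well-defined algebra homomorphism, so that expressions like $p^{\ast}$ and $p$ act on $\mathbf{S}_{v\infty}$ as the composites of the corresponding generators.
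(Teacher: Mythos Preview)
Your argument contains a genuine gap in the ``minimal-length'' step. You claim that if $p_j$ has minimal length among the $p_i$ and $p_j^{\ast}\cdot p_i\neq 0$, then $p_i=p_j$. But $p_j^{\ast}\cdot p_i\neq 0$ only says that $p_j$ is an \emph{initial} segment of $p_i$; minimality of $\operatorname{len}(p_j)$ gives $\operatorname{len}(p_j)\le \operatorname{len}(p_i)$, which is perfectly compatible with $p_j$ being a \emph{proper} initial segment. Concretely, suppose $v$ is an infinite emitter that also carries a loop $e$ (so $s(e)=r(e)=v$), and take $x=v+e\in\mathbf{S}_{v\infty}$. The minimal-length term is $p_j=v$, and $p_j^{\ast}=v$ acts as the identity on both basis vectors, so $p_j^{\ast}\cdot x=x$; nothing has been gained. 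More generally, whenever some $p_i$ properly extends $p_j$ you get $p_j^{\ast}\cdot p_i=q_i$ with $q_i$ a path of positive length ending at $v$, not a scalar multiple of $v$.

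The fix is a one-word change: choose $p_j$ of \emph{maximal} length instead. Then $p_j^{\ast}\cdot p_i\neq 0$ forces $p_j$ to be an initial segment of $p_i$, hence $\operatorname{len}(p_i)\ge\operatorname{len}(p_j)$; maximality gives equality of lengths and therefore $p_i=p_j$. (Here one also uses that if $p_i$ is a proper initial segment of $p_j$, then after stripping off $p_i$ you reach the vertex $v$, and the next ghost-edge operator $S_{e^{\ast}}$ kills $v$ by definition; this is exactly where the infinite-emitter hypothesis, via $S_{e^{\ast}}(v)=0$, enters.) With that change your one-shot argument works and is in fact cleaner than the paper's proof, which arranges the same ingredients into an induction on the number of terms: it takes $p_1$ of smallest length, applies $p_1^{\ast}$, and if the result still has $n$ terms writes it as $k_1v+b$ and then applies $p_t^{\ast}$ for some term $p_t$ of $b$ to kill the $v$-summand and drop below $n$ terms. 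Your final paragraph, showing $L\cdot v=\mathbf{S}_{v\infty}$, is correct as written.
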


\begin{proof}
\bigskip\ Suppose $U$ is non-zero submodule of $\mathbf{S}_{v\infty}$ and let
\[
0\neq a=%
{\textstyle\sum\limits_{i=1}^{n}}
k_{i}p_{i}\in U\qquad\qquad\qquad\qquad(\#)
\]
where $k_{i}\in K$ and the $p_{i}$ are paths in $E$ with $r(p_{i})=v$ and we
assume that the paths $p_{i}$ are all different.

By induction on $n$, we wish to show that $v\in U$. Suppose $n=1$ so that
$a=k_{1}p_{1}$. Then $p_{1}^{\ast}\cdot a=k_{1}v\in U$ and we are done.
Suppose $n>1$ and assume that $v\in U$ if $U$ contains a non-zero element
which is a $K$-linear combination of less than $n$ paths. Among the paths
$p_{i}$, assume that $p_{1}$ has the smallest length. If $p_{1}$ has length
$0$, that is, if $p_{1}=v$ and, for some $s$, $p_{s}$ is a path of length
$>0$, then since $p_{s}^{\ast}\cdot v=0$, $p_{s}^{\ast}\cdot a\in U$ will be a
sum of less than $n$ terms and so by induction $v\in U$. Suppose $p_{1}$ has
length $>0$. Now $p_{1}^{\ast}\cdot a=a^{\prime}\in U$. If $p_{1}^{\ast}\cdot
a$ is a sum of less than $n$ terms, we are done. Otherwise, $a^{\prime}%
=k_{1}p_{1}^{\ast}\cdot p_{1}+b=k_{1}v+b$ where $b\ $is a sum of less than $n$
terms with its first non-zero term, say, $k_{t}p_{t}$. Then $p_{t}^{\ast}\cdot
a^{\prime}=p_{t}^{\ast}\cdot b\in U$ and $0\neq p_{t}^{\ast}\cdot b$ is a sum
of less than $n$ terms. Hence by induction, $v\in U$ and we conclude that
$U=\mathbf{S}_{v\infty}$.
\end{proof}

The next proposition describes the annihilating primitive ideal of the simple
module $\mathbf{S}_{v\infty}$.

\begin{proposition}
\label{Annihilator Ideal}Let $v$ be an infinite emitter. Then
\end{proposition}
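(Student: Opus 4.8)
The proposition asks us to identify the annihilator of $\mathbf{S}_{v\infty}$ as a two-sided ideal of $L=L_K(E)$. Since $\mathbf{S}_{v\infty}$ is simple, this annihilator is a primitive ideal. Given the machinery recalled in the preliminaries — graded ideals are exactly the ideals $I_{(H,S)}$ for admissible pairs $(H,S)$, and $H(v)=E^0\setminus M(v)$ is hereditary and saturated when $v$ is an infinite emitter — the natural guess is that $\mathrm{ann}_L(\mathbf{S}_{v\infty}) = I_{(H(v),S)}$ for a suitable $S\subseteq B_{H(v)}$, most likely $S = B_{H(v)}$ itself (or $B_{H(v)}$ with $v$ possibly playing a special role, since $v$ itself is an infinite emitter not in $H(v)$). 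So the plan is: (i) show the candidate ideal annihilates $\mathbf{S}_{v\infty}$; (ii) show nothing larger does, by analyzing which homogeneous and then general elements kill the module.

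**Containment $I_{(H,S)}\subseteq\mathrm{ann}$.** For $w\in H(v)$ we have $w\ngeq v$, so no path $p$ with $r(p)=v$ can start at $w$; hence $P_w(p)=0$ for all basis elements, i.e. $w\cdot\mathbf{S}_{v\infty}=0$. Since $H(v)$ generates (as an ideal) everything reachable, and annihilators are ideals, $I_{H(v)}\subseteq\mathrm{ann}$. Next, for a breaking vertex $w\in B_{H(v)}$, I would compute $w^{H(v)}=w-\sum_{s(e)=w,\,r(e)\notin H}ee^*$ acting on a path $p$ with $s(p)=w$: the first term returns $p$, and exactly the term $ee^*$ where $e$ is the first edge of $p$ returns $p$ (this edge has $r(e)\geq v$, i.e. $r(e)\notin H$), so the difference is $0$; if $s(p)\neq w$ everything is already $0$. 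If $p=w$ itself, then $w^{H(v)}\cdot w = w - 0 = w \neq 0$ unless $w=v$; this forces $v$ to be singled out. So the correct $S$ is likely $B_{H(v)}\cup\{v\}$ intersected appropriately, or the statement is phrased with $v$ adjoined explicitly — I would determine this by tracking exactly the $p=w$ case. The upshot of this step is that the candidate ideal is contained in $\mathrm{ann}_L(\mathbf{S}_{v\infty})$.

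**The reverse containment — the main obstacle.** This is the hard part. We must show $\mathrm{ann}_L(\mathbf{S}_{v\infty})$ is contained in the candidate $I_{(H,S)}$. The cleanest route: pass to the quotient $\bar L = L/I_{(H,S)} \cong L_K(E\setminus(H,S))$, observe that $\mathbf{S}_{v\infty}$ is a module over $\bar L$, and show that as an $\bar L$-module it is faithful, using that in the quotient graph $v$ becomes a sink (all its edges had range in $H(v)$, since $r(e)\geq r(e)$ forces... wait — actually edges from $v$ land in $T(v)\subseteq M(v)$; the point is whether they survive). The subtlety: an edge $e$ with $s(e)=v$ has $r(e)\in M(v)$ iff $r(e)\geq v$, which happens only if $r(e)$ lies on a cycle through $v$. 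So $v$ is a sink in the quotient iff $v$ is not on a cycle; in general $v$ emits finitely many edges in the quotient (those to breaking vertices of $H(v)$) — but $v$ is a breaking vertex situation is delicate since $v$ is an infinite emitter. I would argue faithfulness directly: take $0\neq \bar a\in\bar L$, write $\bar a=\sum k_i\alpha_i\beta_i^*$ with all paths in the quotient graph ending at vertices in $M(v)$, and use a "Chen-module separates points" argument — multiply on left/right by suitable paths/ghost paths to reach a nonzero scalar multiple of some vertex, then act on an appropriate basis path to get a nonzero vector. The technical core is a normal-form/reduction argument on elements of $\bar L$ to show no nonzero element acts as zero on all of $B$; this mirrors the simplicity proof of Proposition~\ref{InfiniteEmitterSimple} but run "from both sides." I expect this reduction — ensuring that after cancellations in $\bar L$ one still has a genuinely nonzero operator — to be the most delicate bookkeeping in the proof.
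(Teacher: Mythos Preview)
Your containment computation is almost right but muddled at one point: the basis of $\mathbf{S}_{v\infty}$ consists only of paths \emph{ending at $v$}, so the only length-zero basis element is $v$ itself; the case ``$p=w$'' for a breaking vertex $w\neq v$ never arises. The correct case split is whether $v\in B_{H(v)}$, i.e.\ whether $|s^{-1}(v)\cap r^{-1}(M(v))|$ is empty, nonzero finite, or infinite. In the middle case $v^{H(v)}\cdot v = v\neq 0$, so $v^{H(v)}\notin J$ and the answer is $I(H(v),B_{H(v)}\setminus\{v\})$; in the other two cases $v\notin B_{H(v)}$ and the answer is $I(H(v),B_{H(v)})$. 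You never land on this trichotomy, which is the actual content of the proposition.

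For the reverse containment your ``pass to the quotient and prove faithfulness by a Chen-style reduction'' could perhaps be pushed through, but the paper avoids all of that bookkeeping via a structural shortcut you are missing. Since $\mathbf{S}_{v\infty}$ is simple, $J=\mathrm{ann}$ is a primitive ideal with $J\cap E^0=H(v)$. The paper now invokes the classification of primitive ideals of $L_K(E)$ from \cite{R1} (Theorems~3.12 and~4.3): a non-graded primitive ideal with vertex set $H(v)$ forces $v$ to lie on a cycle in $M(v)$ and (in the middle case) would have to contain $v^{H(v)}$. One checks in each of the three cases that this is impossible, so $J$ is graded. But $I(H(v),B_{H(v)})$ is the \emph{largest} graded ideal whose vertex set is $H(v)$; hence $J=I(H(v),B_{H(v)})$ in the first and third cases, and in the middle case $J$ is the largest such graded ideal not containing $v^{H(v)}$, namely $I(H(v),B_{H(v)}\setminus\{v\})$. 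So the ``hard direction'' is a two-line appeal to known structure theory rather than a direct faithfulness computation; your proposed normal-form argument is unnecessary and, as you yourself suspected, would be the most delicate part if you insisted on it.
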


$Ann_{L_{K}(E)}(\mathbf{S}_{v\infty})=\left\{
\begin{array}
[c]{c}%
I(H(v),B_{H(v)})\text{, \ \ \ \ \ \ \ \ \ \ \ \ \ \ \ \ \ \ \ \ if }%
|s^{-1}(v)\cap r^{-1}(M(v))|=0\text{;}\\
I(H(v),B_{H(v)\backslash\{v\}})\text{, \quad if }|s^{-1}(v)\cap r^{-1}%
(M(v))|\neq0\text{ and finite}\\
I(H(v),B_{H(v)})\text{, \qquad\ \ if }|s^{-1}(v)\cap r^{-1}(M(v))|\text{ is
infinite.\quad\ }%
\end{array}
\right.  $

\begin{proof}
Let $J=Ann_{L_{K}(E)}(\mathbf{S}_{v\infty})\ $. Clearly $H(v)\subset J$ since
for any $u\in H(v)$, $u\ngeqslant v$ and so $u\cdot p=0$ for all $p$ with
$r(p)=v$. Indeed $J\cap E^{0}=H(v)$.

Suppose $|s^{-1}(v)\cap r^{-1}(M(v))|=0$ so that $r(s^{-1}(v))\subseteqq
H(v)$. Let $u\in B_{H(v)}$. Clearly $u^{H(v)}\cdot p=0$ if $p$ is a path with
$r(p)=v$ and $s(p)\neq u$. On the other hand, if $p$ is a path from $u$ to $v$
with $p=ep^{\prime}$ where $e$ is an edge, then%
\[
u^{H(v)}\cdot p=(u-%
{\textstyle\sum\limits_{f\in s^{-1}(u),r(f)\notin H(v)}}
ff^{\ast})\cdot p=(e-e)p^{\prime}=0.
\]
This shows that $I(H(v),B_{H(v)})\subseteq J$. Now $J\cap E^{0}=H(v)$. If $J$
were a non-graded ideal, then it follows from the proof of Theorem 3.12 (iii)
of \cite{R1}, that $v$ will be the base of a cycle $c$ with $c^{0}\subset
M(v)$. In particular, there is an edge $e$ with $s(e)=v$ and $r(e)\in M(v)$.
But this is not possible since $r(s^{-1}(v))\subseteq H(v)$. Thus $J$ is a
graded ideal with $J\cap E^{0}=H(v)$. Since $I(H(v),B_{H(v)})$ is the largest
graded ideal for which $I(H(v),B_{H(v)})\cap E^{0}=H(v)$, we conclude that
$J=$ $I(H(v),B_{H(v)})$.

Suppose $|s^{-1}(v)\cap r^{-1}(M(v))|\neq0$ and is finite so that $v\in
B_{H(v)}$. If $u\in B_{H(v)}$ with $u\neq v$, then the arguments in the
preceding paragraph shows that $u^{H(v)}\in J$. But $v^{H(v)}\notin J$ since%
\[
v^{H(v)}\cdot v=(v-%
{\textstyle\sum\limits_{f\in s^{-1}(v),L(f)\notin H(v)}}
ff^{\ast})\cdot v=v\cdot v-0=v\neq0.
\]
This shows that $I(H(v),B_{H(v)}\backslash\{v\})\subseteq J$. Now $J$ is a
primitive ideal with $J\cap E^{0}=H(v)$. If $J$ were a non-graded ideal, then
from the description of the primitive ideals in Theorem 4.3 of \cite{R1}, we
will have $I(H(v),B_{H(v)})\subseteqq J$ and this is not possible since
$v^{H(v)}\notin J$. We then conclude that the graded ideal $J$ must be equal
to $I(H(v),B_{H(v)\backslash\{v\}})$.

Finally, suppose $|s^{-1}(v)\cap r^{-1}(M(v))|$ is infinite. This means, in
particular, there are infinitely many cycles in $M(v)$ based at $v$. As
$M(v)=E^{0}\backslash H(v)$, it \ is then clear from Theorem 3.12 of \cite{R1}
that $J$ cannot be a non-graded ideal. Also, as $v\notin B_{H(v)}$, the
earlier arguments show that $I(H(v),B_{H(v)})\subseteq J$. Observing that
$I(H(v),B_{H(v)})\cap E^{0}=H(v)=J\cap E^{0}$, we then conclude that $J=$
$I(H(v),B_{H(v)})$.
\end{proof}

Before proceeding further, we shall review the construction of some of the
simple modules introduced in \cite{C} and \cite{AR1} and refer them as simple
modules of type 1,2 or 3. In this connection, we wish to point out that the
notation and terminology used by Chen in \cite{C} is different from those used
in papers on Leavitt path algebras such as \cite{AAPS} \ while we shall follow
that of \cite{C}.

Type-1 Simple Module: Chen \cite{C} defines an equivalence relation among
infinite paths by using the following notation. If $p=e_{1}e_{2}\cdot
\cdot\cdot e_{n}\cdot\cdot\cdot$ is an infinite path where the $e_{i}$ are
edges, then for any positive integer $n$, let $\tau_{\leq n}(p)=e_{1}%
e_{2}\cdot\cdot\cdot e_{n}$ and $\tau_{>n}(p)=e_{n+1}e_{n+2}\cdot\cdot\cdot$.
Two infinite paths $p$ and $q$ are said to be \textit{tail}
\textit{equivalent}, in symbols, $p\sim q$, if there exist positive integers
$m$ and $n$ such that $\tau_{>n}(p)=\tau_{>m}(q)$. Then $\backsim$ is an
equivalence relation.

Given an equivalence class of infinite paths $[p]$, let $V_{[p]}$ denote the
$K$-vector space having the set $\{q:q\in\lbrack p]\}$ as a basis. Then Chen
\cite{C} defines an $L$-module operation on $V_{[p]}$ making $V_{[p]}$ a left
$L$-module similar to the way the module operation is defined on
$\mathbf{S}_{v\infty}$ above, except that the condition that $S_{e^{\ast}%
}(v)=0$ for any edge $e$ is dropped. Chen \cite{C} shows that the module
$V_{[p]}$ becomes a simple $L$-module.

Type-2 Simple Module: Let $w$ be a sink and $\mathbf{N}_{w}$ be a $K$-vector
space having as a basis the set $\{p:p$ paths in $E$ with $L(p)=w\}$.
Proceeding as was done above for $\mathbf{S}_{v\infty}$, Chen \cite{C} defines
an $L$-module action on $\mathbf{N}_{w}$ and shows that $\mathbf{N}_{w}$
becomes a simple module.

Type-3 Simple Modules: These additional classes of simple $L$-modules, denoted
respectively by $\mathbf{N}_{v}^{B_{H(v)}},\mathbf{N}_{v}^{H(v)}$ and
$V_{[p]}^{f}$, were introduced in \cite{AR1}:

(i) Suppose that $\ v$ is an infinite emitter such that $v\in B_{H(v)}$. Then
we can build the primitive ideal $P=I_{(H(v),B_{H(v)}\setminus\{v\})}$ (see
\cite{R1}) and the factor ring
\[
L_{K}(E)/P\cong L_{K}(F)\,
\]
where $F=E\setminus(H(v),B_{H(v)}\setminus\{v\})$. Then $F^{0}=(E^{0}\setminus
H(v))\cup\{v^{\prime}\}$,
\[
F^{1}=\{e\in E^{1}:r(e)\notin H(v)\}\cup\{e^{\prime}:e\in E^{1},r(e)=v\}
\]
and $r$ and $s$ are extended to $F$ by $s(e^{\prime})=s(e)$ and $r(e^{\prime
})=v^{\prime}$ for all $e\in E^{1}$ with $r(e)=v$. Note that $v^{\prime}$ is a
sink in $F$ and it is easy to see that $M_{F}(v^{\prime})=F^{0}$.

Accordingly, we may consider the Type 2 simple module $\mathbf{N}_{v^{\prime}%
}$ of $L_{K}(F)$ introduced by Chen corresponding to the sink $v^{\prime}$ in
$F$. Using the quotient map $L_{K}(E)\rightarrow L_{K}(F)$, we may view
$\mathbf{N}_{v^{\prime}}$ as a simple module over $L_{K}(E)$. This simple
$L_{K}(E)$-module is denoted by $\mathbf{N}_{v}^{B_{H(v)}}$.

(ii) Suppose $v$ is an infinite emitter and such that $r(s^{-1}(v))\subseteq
H(v)$. Then $v$ is the unique sink in the graph $G=E\setminus(H(v),B_{H(v)})$.
Let $\mathbf{N}_{v}$ be the corresponding Type 2 simple $L_{K}(E\setminus
(H(v),B_{H(v)})$-module introduced by Chen. It is clear that $\mathbf{N}_{v}$
is a faithful simple $L_{K}(G)$-module. Consider $\mathbf{N}_{v}$ as a simple
$L_{K}(E)$-module through the quotient map $L_{K}(E)\rightarrow L_{K}(G)$.
This simple module is denoted by $\mathbf{N}_{v}^{H(v)}$.

(iii) For any infinite path $p$, $V_{[p]}^{f}$ is the twisted simple
$L_{K}(E)$-module obtained from the simple $L_{K}(E)$-module $V_{[p]}$. See
\cite{AR1} for details.

\begin{proposition}
\label{Breaking vertex}If $v$ is an infinite emitter such that $s^{-1}(v)\cap
r^{-1}(M(v))|\neq0$ and is finite, then $\mathbf{S}_{v\infty}\cong%
$\ $\mathbf{N}_{v}^{B_{H(v)}}$.
\end{proposition}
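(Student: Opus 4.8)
The plan is to construct an explicit $L_{K}(E)$-module isomorphism between $\mathbf{S}_{v\infty}$ and $\mathbf{N}_{v}^{B_{H(v)}}$. Recall that $\mathbf{N}_{v}^{B_{H(v)}}$ is, by definition, the Chen module $\mathbf{N}_{v'}$ for the sink $v'$ in the quotient graph $F=E\setminus(H(v),B_{H(v)}\setminus\{v\})$, viewed as an $L_{K}(E)$-module through the quotient map $\pi\colon L_{K}(E)\twoheadrightarrow L_{K}(E)/P\cong L_{K}(F)$, where $P=I_{(H(v),B_{H(v)}\setminus\{v\})}$. So $\mathbf{N}_{v}^{B_{H(v)}}$ has as $K$-basis the set of paths $q$ in $F$ with $r(q)=v'$. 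The first step is to set up a bijection between this basis and the basis $B=\{p:p\text{ a path in }E,\ r(p)=v\}$ of $\mathbf{S}_{v\infty}$.

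The key observation for that bijection is that the paths $q$ in $F$ ending at $v'$ are exactly of the form $\alpha e'$, where $\alpha$ is a path in $E$ (with $r(e)\notin H(v)$ along the way, automatically, since edges into $H(v)$ don't survive in $F$) and $e'$ is the "primed" edge coming from an edge $e\in E^{1}$ with $r(e)=v$; there is one exception, the length-zero path $v'$ itself. Under the hypothesis $1\le|s^{-1}(v)\cap r^{-1}(M(v))|<\infty$ we have $v\in B_{H(v)}$, so $v'$ is genuinely a sink and these primed edges $e'$ are precisely the finitely many edges $e$ with $s(e)=v$, $r(e)\in M(v)$. On the $\mathbf{S}_{v\infty}$ side, every nontrivial path $p$ with $r(p)=v$ ends with an edge into $v$, i.e. $p=\beta f$ with $r(f)=v$; the trivial path is $v$ itself. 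So the natural map $\psi$ sends a path $p=\beta f$ in $E$ with $r(f)=v$ to the $F$-path $\beta f'$ ending at $v'$, and sends $v\mapsto v'$; this is a $K$-linear bijection on bases, hence a $K$-vector-space isomorphism.

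The remaining — and main — step is to check that $\psi$ intertwines the two $L_{K}(E)$-actions, i.e. $\psi(a\cdot x)=\pi(a)\cdot\psi(x)$ for $a\in L_{K}(E)$ and $x\in\mathbf{S}_{v\infty}$. It suffices to check this on the generators $u\in E^{0}$, $e\in E^{1}$, $e^{\ast}$ and on basis elements $p\in B$, comparing the Chen-type formulas for $P_{u},S_{e},S_{e^{\ast}}$ on $\mathbf{S}_{v\infty}$ with those for $\mathbf{N}_{v'}$ over $L_{K}(F)$. The only place where the two actions could differ in form is precisely where the $\mathbf{S}_{v\infty}$-action uses the extra rule $S_{e^{\ast}}(v)=0$: I expect the verification to reduce to showing that this rule is forced by the relations in $L_{K}(F)$ once $v$ has become the sink $v'$. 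Concretely, $\pi(e^{\ast})\cdot v' = 0$ in $\mathbf{N}_{v'}$ because $v'$ is a sink and, in $F$, $e$ no longer "acts" at $v'$ — the surviving edge is $e'$, and the decomposition of $p=ep'$ in $F$ is via $e'$, not $e$. So the rule $S_{e^{\ast}}(v)=0$ on $\mathbf{S}_{v\infty}$ is exactly mirrored by $\pi(e^{\ast})\cdot v'=0$ on $\mathbf{N}_{v'}$, and conversely $\pi(e'^{\ast})\cdot(\beta f')$ peels off $f'$ just as $S_{f^{\ast}}$ peels off $f$ from $\beta f$. The bookkeeping for the action of $\pi(e)$ (prepending an edge) and $\pi(u)$ (source projection) is routine, using that $s(e')=s(e)$ and $r(e')=v'$. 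The one subtlety to handle with care is that $\pi$ kills $H(v)$ and the elements $u^{H(v)}$ for $u\in B_{H(v)}\setminus\{v\}$; this is consistent because, by Proposition~\ref{Annihilator Ideal}, $P=I_{(H(v),B_{H(v)\setminus\{v\}})}$ annihilates $\mathbf{S}_{v\infty}$ in this case, so the $L_{K}(E)$-action on $\mathbf{S}_{v\infty}$ already factors through $L_{K}(F)$, and it is really an $L_{K}(F)$-module isomorphism that we are verifying. Once the intertwining is established on generators, it extends to all of $L_{K}(E)$ by multiplicativity and $K$-linearity, giving $\mathbf{S}_{v\infty}\cong\mathbf{N}_{v}^{B_{H(v)}}$.
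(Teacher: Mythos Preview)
Your approach is essentially the same as the paper's: both set up the explicit bijection $v\mapsto v'$, $p'e\mapsto p'e'$ on $K$-bases and argue (using that the two modules share the same annihilator, from Proposition~\ref{Annihilator Ideal}) that this extends to an $L_{K}(E)$-module isomorphism; you simply supply more of the generator-by-generator verification that the paper leaves implicit.

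One slip to fix: the sentence ``these primed edges $e'$ are precisely the finitely many edges $e$ with $s(e)=v$, $r(e)\in M(v)$'' is incorrect. By the definition of the quotient graph, the primed edges $e'$ in $F$ are indexed by edges $e\in E^{1}$ with $r(e)\in B_{H(v)}\setminus S=\{v\}$, i.e.\ with $r(e)=v$ (not $s(e)=v$), and there is no reason for there to be only finitely many of them. Fortunately your very next paragraph describes the bijection correctly in terms of $r(f)=v$, so the slip does not propagate.
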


\begin{proof}
From Proposition \ref{Annihilator Ideal} above and Lemma 3.5 of \cite{AR1}, it
is clear that both $\mathbf{S}_{v\infty}$ and\ $\mathbf{N}_{v}^{B_{H(v)}}$are
annihilated by the \ same primitive ideal. Also the $K$-bases of
$\mathbf{S}_{v\infty}$ and\ $\mathbf{N}_{v}^{B_{H(v)}}$ are in bijective
correspondence. Indeed if $p=p^{\prime}e$ is a path with $r(p)=v$, then, in
the graph $F$ defined in type-3 (i) simple module above, $r(e^{\prime
})=v^{\prime}$ and $s(e^{\prime})=s(e)=r(p^{\prime})$ and so $p^{\prime
}e^{\prime}$ is a path in $F$ with $r(p^{\prime}e^{\prime})=v^{\prime}$. Then
$v\longmapsto v^{\prime}$ and $p=p^{\prime}e\longmapsto p^{\prime}e^{\prime}$
is the desired bijection. It is then clear that the map $\phi:\mathbf{S}%
_{v\infty}\longrightarrow\mathbf{N}_{v}^{B_{H(v)}}$ given by $\phi
(v)=v^{\prime}$and $\phi(p^{\prime}e)=p^{\prime}e^{\prime}$ extends to an
isomorphism from $\mathbf{S}_{v\infty}$ to $\mathbf{N}_{v}^{B_{H(v)}}$.
\end{proof}

\begin{proposition}
\label{Empty intersection with M(v)}If $v$ is an infinite emitter for which
$|s^{-1}(v)\cap r^{-1}(M(v))|=0$, then $\mathbf{S}_{v\infty}\cong%
$\ $\mathbf{N}_{v}^{H(v)}$.
\end{proposition}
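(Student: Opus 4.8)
The plan is to mimic the proof of Proposition~\ref{Breaking vertex}: show that $\mathbf{S}_{v\infty}$ and $\mathbf{N}_{v}^{H(v)}$ are annihilated by the same ideal, exhibit a bijection between their $K$-bases, and check that this bijection is an $L_{K}(E)$-module map. First I would invoke Proposition~\ref{Annihilator Ideal}: in the case $|s^{-1}(v)\cap r^{-1}(M(v))|=0$, i.e. $r(s^{-1}(v))\subseteq H(v)$, the annihilator of $\mathbf{S}_{v\infty}$ is $I(H(v),B_{H(v)})$. On the other side, $\mathbf{N}_{v}^{H(v)}$ is by construction the Chen module $\mathbf{N}_{v}$ of the sink $v$ in $G=E\setminus(H(v),B_{H(v)})$, pulled back along the quotient map $L_{K}(E)\to L_{K}(G)$; since $\mathbf{N}_{v}$ is a faithful $L_{K}(G)$-module and $\ker(L_{K}(E)\to L_{K}(G))=I(H(v),B_{H(v)})$, its annihilator in $L_{K}(E)$ is exactly $I(H(v),B_{H(v)})$ as well. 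So both are faithful simple modules over the same quotient algebra $L_{K}(E)/I(H(v),B_{H(v)})\cong L_{K}(G)$, and it suffices to show they are isomorphic as $L_{K}(G)$-modules.

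Next I would set up the basis bijection. The basis of $\mathbf{S}_{v\infty}$ is $\{p : p \text{ a path in } E,\ r(p)=v\}$, and the basis of $\mathbf{N}_{v}=\mathbf{N}_{v}^{H(v)}$ is $\{q : q \text{ a path in } G,\ r(q)=v\}$. Because $r(s^{-1}(v))\subseteq H(v)$, no edge of $E$ emitted by $v$ survives in $G$ — so $v$ really is a sink in $G$ — and there are no breaking vertices $v'$ needed for paths ending at $v$ (the vertex $v$ itself is not in $B_{H(v)}$ in this case, since $s^{-1}(v)\cap r^{-1}(M(v))=\varnothing$). A path $p=e_1\cdots e_n$ in $E$ with $r(p)=v$ has all its vertices in $M(v)=E^0\setminus H(v)$ and all its edges satisfy $r(e_i)\notin H(v)$, hence each $e_i$ is already an edge of $G$; conversely any path in $G$ ending at $v$ is such a path in $E$. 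Thus the identity map on symbols $p\mapsto p$ is the desired bijection of bases, and I would let $\psi:\mathbf{S}_{v\infty}\to\mathbf{N}_{v}^{H(v)}$ be its $K$-linear extension.

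Finally I would verify that $\psi$ intertwines the two module structures, i.e. $\psi(a\cdot x)=a\cdot\psi(x)$ for all $a\in L_{K}(E)$ and $x\in\mathbf{S}_{v\infty}$. Since $L_{K}(E)$ is generated by the $u$, $e$, $e^{\ast}$, it is enough to check this on these generators and on basis paths $p$. For $u\in E^0$: if $u\in H(v)$ both sides are $0$; if $u\in M(v)$, $P_u$ acts the same way on $p$ in both modules (picking out $p$ when $u=s(p)$). For an edge $e$: if $r(e)\in H(v)$ then $e$ acts as $0$ on $\mathbf{S}_{v\infty}$ (since $r(e)=s(p)$ is impossible when $s(p)\in M(v)$), and $e$ is not an edge of $G$ so it acts as $0$ on $\mathbf{N}_{v}^{H(v)}$ too; if $r(e)\notin H(v)$ the formulas for $S_e$ and $S_{e^{\ast}}$ are literally identical in the two constructions. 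The one place to be careful is the clause $S_{e^{\ast}}(v)=0$ built into the definition of $\mathbf{S}_{v\infty}$: I must confirm this matches the action of $e^{\ast}$ on the generator $v$ in Chen's module $\mathbf{N}_{v}$ — but $v$ is a sink in $G$, so no $p$ in the basis begins with an edge, and Chen's formula also gives $e^{\ast}\cdot v=0$. Hence $\psi$ is an $L_{K}(E)$-module isomorphism. The main (though modest) obstacle is bookkeeping around the quotient graph $G$: one must make sure that, under the hypothesis $r(s^{-1}(v))\subseteq H(v)$, no breaking-vertex primes $v'$ intrude into the edge or vertex sets relevant to paths ending at $v$, so that the basis bijection is genuinely the identity and the generator-by-generator comparison goes through cleanly.
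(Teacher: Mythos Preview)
Your proof is correct and follows the same approach as the paper's one-line proof, which simply notes that the two simple modules have the same $K$-basis and the same annihilating primitive ideal. You have supplied the details the paper leaves implicit---in particular, that the identity on paths really is a bijection of bases (because in the quotient graph $G=E\setminus(H(v),B_{H(v)})$ with $S=B_{H(v)}$ no primed vertices or edges appear, and paths in $E$ ending at $v$ lie entirely in $M(v)$) and that the generator actions agree---so your argument is a fleshed-out version of the same idea.
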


\begin{proof}
This is immediate after observing that these two simple modules have the same
$K$-basis and the same annihilating primitive ideal.
\end{proof}

\begin{notation}
In conformity with the notation used in \cite{AR1}, when $v$ is an infinite
emitter for which $|s^{-1}(v)\cap r^{-1}(M(v))|$ is infinite, we shall denote
the corresponding simple module $\mathbf{S}_{v\infty}$ by $\mathbf{N}%
_{v\infty}$.
\end{notation}

\begin{proposition}
\label{Different simple}The new simple module $\mathbf{N}_{v\infty}$ is not
isomorphic to any of the previously defined simple $L$-modules of Type 1, 2 or 3.
\end{proposition}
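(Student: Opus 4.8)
The plan is to distinguish $\mathbf{N}_{v\infty}$ from all previously known simple modules by comparing annihilating primitive ideals together with internal structural invariants, handling the three earlier types separately. The governing fact I will lean on is Proposition~\ref{Annihilator Ideal}: when $|s^{-1}(v)\cap r^{-1}(M(v))|$ is infinite, $Ann_L(\mathbf{N}_{v\infty})=I(H(v),B_{H(v)})$ is a \emph{graded} primitive ideal, and moreover $v\notin B_{H(v)}$ so that $v$ itself survives in the quotient $L/Ann_L(\mathbf{N}_{v\infty})\cong L_K(E\setminus(H(v),B_{H(v)}))$, where it becomes an infinite emitter (not a sink) that is the base of infinitely many cycles, all lying in $M(v)=E^0\setminus H(v)$. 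That quotient graph is cofinal and downward directed at $v$, and $v$ is a vertex on a cycle with another exit, so the quotient is a simple ring that is neither a matrix ring over $K$ nor over $K[x,x^{-1}]$; I will use this ``the quotient has a cycle with an exit but no sink and no purely infinite-path structure forcing type 1'' picture as the main separating device.

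First I would dispose of Type~2 and the Type~3 modules $\mathbf{N}_v^{B_{H(v)}}$, $\mathbf{N}_v^{H(v)}$, $\mathbf{N}_{v'}$: each of these is (by construction, or by Proposition~\ref{Breaking vertex} and Proposition~\ref{Empty intersection with M(v)} and the discussion preceding them) a Chen module $\mathbf{N}_w$ attached to a genuine \emph{sink} $w$ in some quotient graph, hence its annihilator in that quotient is the ideal whose quotient graph has $w$ as a sink reachable from everything; concretely the annihilator of a Type~2 module $\mathbf{N}_w$ satisfies $Ann\cap E^0=H(w)$ with $w\notin H(w)$ a sink, so the corresponding quotient graph has a sink. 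For $\mathbf{N}_{v\infty}$ the quotient graph $E\setminus(H(v),B_{H(v)})$ has \emph{no} sink at all in the relevant component (indeed $v$ is an infinite emitter there and generates infinitely many cycles), so the annihilating primitive ideals cannot coincide; since isomorphic simple modules have equal annihilators, this rules out all of Type~2 and Type~3(i),(ii). For Type~3(iii), the twisted module $V_{[p]}^f$ has the same annihilator as the corresponding Type~1 module $V_{[p]}$ (twisting does not change the annihilator), so it is handled together with Type~1.

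Next, Type~1 (and hence $V_{[p]}^f$): a Chen module $V_{[p]}$ for an infinite path $p$ has basis the tail-equivalence class $[p]$, and the key distinction from $\mathbf{N}_{v\infty}$ is the behaviour of vertex idempotents. In $\mathbf{N}_{v\infty}$ the element $v$ acts as a rank-one idempotent (it fixes the single basis vector $v$ and kills every path of positive length), so $v\cdot\mathbf{N}_{v\infty}$ is one-dimensional; I will show that in $V_{[p]}$ \emph{no} vertex idempotent acts with one-dimensional image, because for any vertex $u$ on (a tail of) $p$ there are infinitely many basis elements $q\in[p]$ with $s(q)=u$ — one gets them by prepending distinct finite paths ending at $u$, and there is always at least the relevant tail plus, because $u$ lies on a path that is genuinely infinite, one can also truncate/extend — more carefully, $u P_u V_{[p]}$ is infinite-dimensional whenever $u$ appears infinitely often on $p$ or $u$ receives from two distinct vertices of $[p]$; and when $u$ appears only finitely often one checks $P_u V_{[p]}$ is still at least the span of all $\tau_{>n}(p)$ with $s(\tau_{>n}(p))=u$ together with other representatives, which in the situation relevant here (recall $v$ is the base of infinitely many cycles in $M(v)$, and $V_{[p]}$ would have to have annihilator $I(H(v),B_{H(v)})$ forcing $p$ to live in $M(v)$ and pass through $v$) is infinite-dimensional. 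So matching annihilators would force a vertex acting as a rank-one idempotent on one side but not the other — contradiction. The cleanest phrasing: if $\mathbf{N}_{v\infty}\cong V_{[p]}^{(f)}$ then the annihilators agree, forcing $[p]\subseteq M(v)$ with $v\in p^0$; then compare $\dim_K v\cdot(-)$ on the two modules, which is $1$ on the left and $\infty$ on the right.

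The step I expect to be the main obstacle is the Type~1 comparison, specifically making the dimension count $\dim_K(v\cdot V_{[p]})=\infty$ fully rigorous: one must rule out the degenerate possibility that the tail-equivalence class $[p]$ is ``thin'' at $v$ even though $v$ is an infinite emitter — this uses that all but finitely many of the infinitely many cycles at $v$ actually return to $v$ (they are closed paths based at $v$), so concatenating a cycle $c$ with a fixed infinite tail $q\in[p]$ based at $v$ produces infinitely many \emph{distinct} elements $c^k q\in[p]$ with $s(c^k q)=v$, giving the needed infinite-dimensionality cleanly. A secondary technical point is to confirm that twisting (Type~3(iii)) genuinely preserves the annihilator and the vertex-action dimensions; I would cite the construction in \cite{AR1} rather than re-derive it. Assembling these, $\mathbf{N}_{v\infty}$ is isomorphic to none of the Type~1, 2, 3 modules.
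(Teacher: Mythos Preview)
Your Type~1 argument rests on a false premise: you assert that in $\mathbf{N}_{v\infty}$ the vertex $v$ ``fixes the single basis vector $v$ and kills every path of positive length,'' making $v\cdot\mathbf{N}_{v\infty}$ one-dimensional. But by Definition~2.1, $P_v(p)=p$ whenever $s(p)=v$, so $v$ fixes \emph{every} closed path based at $v$, not just the trivial one. Since $|s^{-1}(v)\cap r^{-1}(M(v))|$ is infinite, there are infinitely many closed paths at $v$ in the basis $B$, and $v\cdot\mathbf{N}_{v\infty}$ is infinite-dimensional. The dimension comparison with $V_{[p]}$ therefore does not separate the two modules, and your whole Type~1 case collapses. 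The paper instead uses the other special feature of the definition, namely $S_{e^\ast}(v)=0$ for every edge $e$: any $L$-morphism $\varphi:\mathbf{N}_{v\infty}\to V_{[p]}$ must send $v$ to $\sum k_i q_i$ with $q_i\in[p]$, and applying $\tau_{\le n}(q_1)^\ast$ (which has positive length) kills $v$ on the left but leaves $k_1\tau_{>n}(q_1)\neq 0$ on the right, forcing $\varphi=0$.

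Your handling of $V_{[p]}^f$ is also off: you claim twisting does not change the annihilator and fold it into the Type~1 argument, but the paper cites \cite[Lemma~2.4]{AR1} to the effect that $Ann_L(V_{[p]}^f)$ is \emph{non-graded}, and disposes of this case immediately by contrasting with the graded annihilator $I(H(v),B_{H(v)})$ of $\mathbf{N}_{v\infty}$. Even if your annihilator claim were right, this case would inherit the broken dimension argument above. Your treatment of Type~2 and Type~3(i),(ii) via ``the quotient graph has a sink versus no sink'' is essentially the paper's argument (it compares $M(w)$ with $M(v)$ directly), so that part is fine.
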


\begin{proof}
For convenience, we list the simple modules of type 1, 2 and 3 as
$\mathbf{N}_{w},\mathbf{N}_{v_{1}}^{B_{H(v_{1})}},\mathbf{N}_{v_{2}}%
^{H(v_{2})},V_{[p]}^{f},V_{[p]}$. Now $\mathbf{N}_{v\infty}\ncong V_{[p]}^{f}$
since the annihilator of $V_{[p]}^{f}$ is a non-graded primitive ideal
(\cite{AR1}, Lemma 2.4) while, as we proved in Proposition
\ref{Annihilator Ideal}, $Ann_{L}(\mathbf{S}_{v\infty})=I_{(H(v),B_{H(v)})}$
is a graded ideal. The proof that $\mathbf{N}_{v\infty}\ncong V_{[p]}$ uses
the same argument of Chen (\cite{C}, Theorem 3.7 (3)). We give the proof for
completeness. Suppose $\varphi:\mathbf{N}_{v\infty}\longrightarrow
\mathbf{V}_{[p]}$ is an $L$-morphism. We claim $\varphi=0$, that is,
$\varphi(v)=0$. Otherwise, write $\varphi(v)=%
{\textstyle\sum\limits_{i=1}^{n}}
k_{i}q_{i}$ where $q_{i}\in\lbrack p]$ and assume that the $q_{i}$ are all
different. Choose $n$ so that $\tau_{\leq n}(q_{i})$ are all pairwise
different. Now in the definition of $\mathbf{N}_{v\infty}$ as an $L$-module,
$e^{\ast}\cdot v=0$ for all $e\in E^{1}$ and so $\tau_{\leq n}(q_{1})^{\ast
}\cdot v$ $=0$, but $\varphi(\tau_{\leq n}(q_{1})^{\ast}\cdot v)=\tau_{\leq
n}(q_{1})^{\ast}\cdot\varphi(v)=k_{1}\tau_{>n}(q_{1})\neq0$, a contradiction.
Hence $\mathbf{N}_{v\infty}\ncong V_{[p]}$.

Since the annihilators of $\mathbf{N}_{w},\mathbf{N}_{v_{1}}^{B_{H(v_{1})}%
},\mathbf{N}_{v_{2}}^{H(v_{2})}$ and $\mathbf{N}_{v\infty}$ are all graded
ideals, it is enough if we can show that the set of vertices belonging to the
annihilators of these modules are all different. We first show that
$\mathbf{N}_{v\infty}\ncong\mathbf{N}_{w}$. Now the vertex set $H(w)\neq
H(v)$, since otherwise $M(w)=M(v)$ and this is not possible since $M(w)$
contains a sink (namely, $w$), while $M(v)$ does not. Hence $\mathbf{N}%
_{v\infty}\ncong\mathbf{N}_{w}$. Likewise, $H(v_{2})\neq H(v)$, since
otherwise $M(v_{2})=M(v)$ which will imply that $v_{2}\geq v$ in $M(v_{2})$
contradicting the fact that $v_{2}$ is a sink in $M(v_{2})$. So $\mathbf{N}%
_{v\infty}\ncong\mathbf{N}_{v_{2}}^{H(v_{2})}$. Finally, the annihilators of
$\mathbf{N}_{v_{1}}^{B_{H(v_{1})}}$and $\mathbf{N}_{v\infty}$ (being
$I_{(H,B_{H}\backslash\{v_{1}\})}$ and $I_{(H,B_{H})}$ respectively) are
different and so $\mathbf{N}_{v_{1}}^{B_{H(v_{1})}}$ $\ncong\mathbf{N}%
_{v\infty}$.\bigskip
\end{proof}

\section{The cardinality of the set of simple $L_{K}(E)$-modules}

As before, $E$ denotes an arbitrary graph with no restrictions on the
cardinality of $E^{0}$ or $E^{1}$. We wish to estimate the size of the
isomorphism classes of simple left $L_{K}(E)$-modules. In this connection, we
follow the ideas of Rosenberg \cite{RO}. However, we need to modify his
arguments for the case of Leavitt path algebras which, among other
differences, do not always have multiplicative identities. We first show that,
given a fixed simple module $S$, the cardinality of the set of all maximal
left ideals $M$ of $L_{K}(E)$ such that $L_{K}(E)/M\cong S$ is at most the
cardinality of $L_{K}(E)$. Using a Boolean subring of idempotents induced by
the paths in $L_{K}(E)$, we obtain a lower bound for the cardinality of the
set of non-isomorphic simple $L_{K}(E)$-modules. In particular, if $L_{K}(E)$
is a countable dimensional simple algebra, then it will have either exactly
$1$ or at least $2^{\aleph_{0}}$ distinct isomorphism classes of simple modules.

As before\textit{,} we shall denote\textbf{\ }the Leavitt path algebra\textbf{
}$L_{K}(E)$\textbf{ }by\textbf{ }$L$\textbf{. }We begin with a simple
description of maximal left ideals of $L$.

\begin{lemma}
\label{VertexSimple}Suppose $M$ is a maximal left ideal of $L$. Then for any
idempotent $\epsilon\notin M$, $M\epsilon\subset M$ and $M$ can be written as
$M=N\oplus L(1-\epsilon)$ where $N=M\cap L\epsilon=M\epsilon$. Every simple
left $L$-module $S$ is isomorphic to $Lv/N$ for some $v\in E^{0}$ and some
maximal $L$-submodule $N$ of $Lv$.
\end{lemma}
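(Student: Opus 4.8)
The plan is to prove the lemma in two stages: first establish the decomposition of a maximal left ideal $M$ relative to an idempotent not in $M$, and then deduce the representation of an arbitrary simple module as $Lv/N$.

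For the first stage, fix a maximal left ideal $M$ and an idempotent $\epsilon \notin M$. Since $M$ is maximal and $\epsilon \notin M$, we have $L\epsilon + M = L$ (indeed $M + L\epsilon$ is a left ideal strictly containing $M$, so it equals $L$). Now I want to show $M\epsilon \subseteq M$. The point is that $M\epsilon$ is a right-multiple set, not obviously inside $M$, so I need the maximality. Consider the left ideal $M + M\epsilon$... actually the cleaner route: write $\epsilon = a + m$ with $a \in L\epsilon$, $m \in M$; then for $x \in M$, $x\epsilon = xa + xm$, and $xm \in M$, so it suffices to show $xa \in M$. Hmm — instead I would argue directly: the map $L \to L$, $y \mapsto y\epsilon$ restricted to $M$ lands in $M$ because $M$ is a maximal (hence by a standard argument "modular/regular") left ideal with $\epsilon$ acting as a right identity modulo $M$: since $L = L\epsilon + M$, for any $x\in L$ we have $x - x\epsilon \in M$ (write $x = b\epsilon + m$, then $x - x\epsilon = m - m\epsilon$, wait that still needs $m\epsilon \in M$). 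The honest approach: $x\epsilon - x \in M$ for all $x \in L$? That would need $x(1-\epsilon) \in M$, i.e. $L(1-\epsilon) \subseteq M$; but $L(1-\epsilon)$ is a left ideal, and $\epsilon \notin L(1-\epsilon)$ (since $\epsilon = \epsilon\epsilon$ and $\epsilon(1-\epsilon)=0$, an element of $L(1-\epsilon)$ killed on the right by... ), so $L(1-\epsilon) \neq L$; if $L(1-\epsilon) \subseteq M$ we would be done with $M = (M \cap L\epsilon) \oplus L(1-\epsilon)$. Whether $L(1-\epsilon) \subseteq M$ holds must follow from maximality of $M$ together with $\epsilon \notin M$: since $L(1-\epsilon) + M$ is a left ideal; if it equals $L$ then $\epsilon \in L(1-\epsilon) + M$, so $\epsilon = y(1-\epsilon) + m$, giving $\epsilon = \epsilon\epsilon = y(1-\epsilon)\epsilon + m\epsilon = 0 + m\epsilon$, so $\epsilon = m\epsilon$. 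Then for any $x$, $x\epsilon = xm\epsilon$... and $xm \in M$, but we want $x\epsilon \in M$; since $M$ is a left ideal and $xm \in M$, indeed $xm\epsilon \in M$, so $x\epsilon \in M$ for all $x$, i.e. $L\epsilon \subseteq M$, contradicting $\epsilon \notin M$. Hence $L(1-\epsilon) + M \neq L$, and maximality forces $L(1-\epsilon) \subseteq M$. Then $M = M \cap (L\epsilon \oplus L(1-\epsilon)) = (M \cap L\epsilon) \oplus L(1-\epsilon)$ by modularity, and $M\epsilon \subseteq M$ follows since for $x = n + y$ with $n \in M\cap L\epsilon$, $y \in L(1-\epsilon)$, we get $x\epsilon = n\epsilon + 0 = n \in M$; and $N := M\cap L\epsilon = M\epsilon$ because every element of $M\cap L\epsilon$ equals its own image under $\cdot\,\epsilon$. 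This is the step I expect to be the main obstacle — pinning down exactly which containments follow from maximality versus which are formal, in a non-unital ring.

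For the second stage, let $S$ be a simple left $L$-module. Pick $0 \neq s \in S$; then $Ls = S$ by simplicity, so $S \cong L/\mathrm{ann}(s)$ where $\mathrm{ann}(s)$ is a maximal left ideal $M$. Because $L = \bigoplus_{v \in E^0} Lv$, we cannot have $vs = 0$ for every vertex $v$ (else $Ls = 0$), so there is $v \in E^0$ with $vs \neq 0$; then $v \notin M$ and $v$ is an idempotent, so by the first stage $M = N \oplus L(1-v)$ with $N = M \cap Lv$. Quotienting, $S \cong L/M \cong Lv/N$, and $N$ is a maximal $L$-submodule of $Lv$ precisely because $M$ is a maximal left ideal of $L$ (submodules of $Lv$ containing $N$ correspond to left ideals between $M$ and $L$). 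This gives the stated conclusion. I would close by remarking that $Lv/N$ is the desired presentation and that the decomposition makes $L/M \cong Lv/N$ canonical via the projection $L \to Lv$.

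Throughout I will use freely the decomposition $L = Lv \oplus L(1-v)$ for a vertex (or idempotent) $v$ stated in the preliminaries, and the elementary fact that in any ring a maximal left ideal is the annihilator of a cyclic simple module and conversely. No deeper Leavitt-path-algebra machinery is needed for this lemma; the subtlety is purely the non-unital bookkeeping, which is why isolating the role of the idempotent $\epsilon$ (and the left ideal $L(1-\epsilon)$) up front is the right organizing principle.
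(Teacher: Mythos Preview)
Your first-stage argument has a genuine gap at the step ``since $M$ is a left ideal and $xm \in M$, indeed $xm\epsilon \in M$'': a left ideal absorbs multiplication on the \emph{left}, not the right, so from $xm \in M$ you cannot deduce $(xm)\epsilon \in M$ without already knowing $M\epsilon \subset M$, which is exactly the point in dispute. This makes your derivation of the contradiction circular. The paper's own proof has the matching gap: the phrase ``By maximality, $M\cap L\epsilon = M\epsilon$'' is asserted but not justified, and it amounts to the same unproved inclusion $M\epsilon \subset M$.

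In fact this gap cannot be closed, because the claim is false even for Leavitt path algebras. Take $L = L_K(E) \cong M_2(K)$ for the graph $E\colon v \xrightarrow{e} w$, identifying $v=e_{11}$, $w=e_{22}$, $e=e_{12}$, $e^{\ast}=e_{21}$. Let $\epsilon = v = e_{11}$ and let $M=\{A: A\left(\begin{smallmatrix}1\\1\end{smallmatrix}\right)=0\}$, a maximal left ideal not containing $\epsilon$. Then $m=\left(\begin{smallmatrix}1 & -1\\ 0 & 0\end{smallmatrix}\right)\in M$ but $m\epsilon = e_{11}=\epsilon\notin M$, so $M\epsilon\not\subset M$; moreover $L(1-\epsilon)=Le_{22}\not\subset M$ since $e_{22}\notin M$, so the decomposition $M=(M\cap L\epsilon)\oplus L(1-\epsilon)$ also fails. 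The second assertion of the lemma --- that every simple $S$ is isomorphic to $Lv/N$ for some vertex $v$ --- is correct and can be proved without the first: pick $0\neq s\in S$ and a vertex $v$ with $vs\neq 0$; then $a\mapsto as$ is a nonzero (hence surjective) $L$-map $Lv\to S$ whose kernel $N$ is a maximal submodule. Your second stage is essentially this argument, but you route it through the flawed decomposition; arguing directly salvages the part of the lemma that is actually true.
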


\begin{proof}
Let $M$ be a maximal left ideal of $L$ and $\epsilon=\epsilon^{2}\in
L\backslash M$. If $x\in M\cap L\epsilon$ then $x=x\epsilon$ and so $M\cap
L\epsilon\subset M\epsilon$. By maximality, $M\cap L\epsilon=M\epsilon$, so
$M\epsilon\subset M$ for all idempotents $\epsilon$. Writing each $x\in M$ as
$x=x\epsilon+(x-x\epsilon)$, we obtain $M=M\epsilon\oplus M(1-\epsilon)\subset
M\epsilon\oplus L(1-\epsilon)$. By maximality, $M=N\oplus L(1-\epsilon)$ where
$N=M\epsilon=M\cap L\epsilon$ is a maximal $L$-submodule of $L\epsilon$.

Suppose $S$ is a simple left $L$-module, say $S=L/M$ for some maximal left
ideal of $L$. Since $L=%
{\textstyle\bigoplus\limits_{v\in E^{0}}}
Lv$ and $M\neq L$, there is a vertex $v\notin M$. By the preceding paragraph,
we can write $M=N\oplus L(1-v)$ where $N=M\cap Lv$. Then $S=[(Lv\oplus
L(1-v)]/[N\oplus L(1-v)]\cong Lv/N$.
\end{proof}

\begin{lemma}
\label{IsoSimple}Suppose $Lv/N$ is a simple left $L$-module with $v\in E^{0}$.
Then, for any vertex $u$, a simple module $Lu/N^{\prime}$ is isomorphic to
$Lv/N$ if and only if there is an element $a=uav\in Lv$ such that $a\notin N$
and $N^{\prime}a\subset N$. In this case, $N^{\prime}=\{y\in Lu:ya\in N\}$.
\end{lemma}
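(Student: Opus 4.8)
The plan is to characterize when a simple module $Lu/N'$ is isomorphic to $Lv/N$ by tracking where a generator can go. Suppose first that $\varphi\colon Lv/N \to Lu/N'$ is an $L$-module isomorphism. Write $\bar v$ for the image of $v$ in $Lv/N$ and $\bar u$ for the image of $u$ in $Lu/N'$; these are cyclic generators. Since $\bar v$ is a generator, $\varphi(\bar v)$ generates $Lu/N'$, hence $\varphi(\bar v) = \overline{b}$ for some $b \in Lu$ with $b \notin N'$. Because $v\bar v = \bar v$, applying $\varphi$ gives $v\overline{b} = \overline{b}$, i.e. $\overline{vb} = \overline{b}$; replacing $b$ by $vb$ (which lies in $vLu$ and is still not in $N'$ since it represents the same nonzero element) we may assume $b = vbu \in vLu$. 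Symmetrically, using $\varphi^{-1}$, there is $a = uav \in uLv$ with $a \notin N$ and $\varphi^{-1}(\bar u) = \overline{a}$.

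Next I would verify the two stated conditions for this $a$. Since $\varphi^{-1}(\bar u) = \overline{a}$ and $\varphi^{-1}$ is injective with $\bar u \neq 0$, we get $a \notin N$. For the condition $N'a \subseteq N$: take $y \in N'$. Then $\overline{ya}$ in $Lv/N$ equals $\varphi^{-1}(\overline{ya}) $ computed as $y\varphi^{-1}(\bar u)$... more carefully, $\overline{a} = \varphi^{-1}(\bar u)$, so $\overline{ya} = y\overline{a} = y\varphi^{-1}(\bar u) = \varphi^{-1}(y\bar u) = \varphi^{-1}(\overline{yu}) = \varphi^{-1}(\overline{y})$. But $y \in N'$ means $\overline{y} = 0$ in $Lu/N'$, so $\overline{ya} = 0$ in $Lv/N$, i.e. $ya \in N$. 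This gives $N'a \subseteq N$.

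Conversely, suppose $a = uav \in Lv$ with $a \notin N$ and $N'a \subseteq N$. Consider the map $Lu \to Lv/N$ sending $x \mapsto \overline{xa}$. This is a well-defined left $L$-module homomorphism; its kernel is $\{x \in Lu : xa \in N\}$, which contains $N'$ by hypothesis. Hence it factors through a nonzero homomorphism $\psi\colon Lu/N' \to Lv/N$ (nonzero because $\overline{ua} = \overline{a}$, and $a \notin N$ forces $\overline{a}\ne0$; note $ua=a$ since $a=uav$). Since $Lu/N'$ is simple, $\psi$ is injective; since $Lv/N$ is simple and $\psi \neq 0$, $\psi$ is surjective. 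Thus $\psi$ is an isomorphism, so $Lu/N' \cong Lv/N$.

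For the final assertion, I would argue that under these equivalent conditions $N'$ is forced to be exactly $\{y \in Lu : ya \in N\}$. One inclusion is the hypothesis $N' \subseteq \{y : ya \in N\}$. For the reverse, note $\{y \in Lu : ya \in N\}$ is a proper left $L$-submodule of $Lu$: it is proper precisely because $u \notin \{y : ya \in N\}$, since $ua = a \notin N$. As $N'$ is a maximal submodule of $Lu$ contained in this proper submodule, they coincide. The main obstacle is purely bookkeeping: making sure at each step that the representatives chosen in $vLu$ and $uLv$ genuinely remain outside the respective submodules and that the idempotents $u,v$ act as identities on the relevant cyclic modules — once the computation $\overline{ya} = \varphi^{-1}(\overline{y})$ is set up correctly, everything else is formal. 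No deep input beyond simplicity of the two modules is needed.
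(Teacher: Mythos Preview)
Your proof is correct and follows essentially the same approach as the paper's: track the image of the generator $u$ under an isomorphism $Lu/N'\to Lv/N$ to produce $a=uav\notin N$ with $N'a\subset N$, and conversely build the isomorphism by $y+N'\mapsto ya+N$, then use maximality of $N'$ for the final equality. The detour through the element $b=vbu$ in your first paragraph is unnecessary (you never use it), but it does no harm.
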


\begin{proof}
Suppose $\sigma:Lu/N^{\prime}\rightarrow Lv/N$ is an isomorphism. Let
$\sigma(u+N^{\prime})=x+N$ for some $x\in Lv$. Now $\sigma(u+M)=\sigma
(u(u+M))=u(x+N)=ux+N$. Then $a=ux$ satisfies $a=uav$, $a\notin N$ and
$\sigma(u+N^{\prime})=a+N$. Moreover, $N^{\prime}a\subset N$ because, for any
$y\in N^{\prime}$, we have $ya+N=y(a+N)=y\sigma(u+M)=\sigma(y+M)=\sigma
(0+M)=0+N$. Note that the left ideal $I=\{y\in Lu:ya\in N\}$ contains
$N^{\prime}$ and $I\neq Lu$ since $u\notin I$ (as $ua=a\notin N$). Hence
$I=N^{\prime}$, by the maximality of $N^{\prime}$.

Conversely, suppose $N^{\prime}a\subset N$ for some $a$ satisfying $a\notin N$
and $a=uav$. Define $f:Lu/N^{\prime}\rightarrow Lv/N$ by $f(y+N^{\prime
})=ya+N$. Now $N^{\prime}a\subset N$ implies that $f$ is well-defined and is a
homomorphism. Now $f\neq0$ since $f(u+N^{\prime})=ua+N=a+N\neq N$. As both
$Lu/N^{\prime}$ and $Lv/N$ are simple modules, $f$\ is an isomorphism.
\end{proof}

\begin{lemma}
\label{DistinctSimple}Let $v$ be a vertex and $A=Lv/N$ be a simple left
$L$-module. Suppose, for $u,w\in E^{0}$, $B=Lu/N_{1}$ and $C=Lw/N_{2}$ are
both isomorphic to $A$ and $b=ubv$ and $c=wcv$ are the corresponding elements
satisfying \ $b,c\notin N$, $N_{1}b\subset N$ and $N_{2}c\subset N$ as
established in Lemma \ref{IsoSimple}. Then $B\neq C$ implies $b\neq c$.
\end{lemma}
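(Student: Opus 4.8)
The plan is to prove the contrapositive: assuming $b=c$, I will show that $B=C$, that is, that $u=w$ and $N_{1}=N_{2}$, so that $Lu/N_{1}$ and $Lw/N_{2}$ are literally the same quotient module. The argument uses only two facts already at hand: the vertices $\{x:x\in E^{0}\}$ are pairwise orthogonal idempotents (together with $L=\bigoplus_{x\in E^{0}}Lx$), and Lemma \ref{IsoSimple} furnishes the explicit description $N^{\prime}=\{y\in Lu:ya\in N\}$ of the submodule occurring in an isomorphism $Lu/N^{\prime}\cong Lv/N$ with witnessing element $a$.

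First I would recover the source-vertex information carried by $b$ and $c$. From $b=ubv$ and $u^{2}=u$ we get $ub=b$, and similarly $wc=c$. Suppose, toward a contradiction, that $u\neq w$; then $uw=0$. Since $b=c$ we have $wb=wc=c=b$, and therefore $b=ub=u(wb)=(uw)b=0$, contradicting $b\notin N$ (in particular $b\neq0$). Hence $u=w$, and in particular $Lu=Lw$, so that $b$ and $c$ are witnessing elements for isomorphisms $Lu/N_{1}\cong Lv/N$ and $Lu/N_{2}\cong Lv/N$ respectively.

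It then remains only to identify $N_{1}$ and $N_{2}$. Applying the final assertion of Lemma \ref{IsoSimple} to $B=Lu/N_{1}\cong Lv/N$ with witnessing element $b$ gives $N_{1}=\{y\in Lu:yb\in N\}$, and applying it to $C=Lw/N_{2}=Lu/N_{2}\cong Lv/N$ with witnessing element $c$ gives $N_{2}=\{y\in Lu:yc\in N\}$. Since $b=c$, these two defining conditions coincide, so $N_{1}=N_{2}$, and hence $B=Lu/N_{1}=Lw/N_{2}=C$, which is the contrapositive of the claim.

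I do not expect a genuine obstacle here. The only point requiring care is that one must establish $u=w$ first, via the orthogonality of the vertex idempotents, before the descriptions of $N_{1}$ and $N_{2}$ from Lemma \ref{IsoSimple} can be meaningfully compared, since a priori they are submodules of the possibly distinct left ideals $Lu$ and $Lw$.
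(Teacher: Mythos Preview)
Your proof is correct and follows essentially the same route as the paper's. Both argue the contrapositive and first obtain $u=w$ by the same orthogonality computation; for the remaining step you invoke the formula $N^{\prime}=\{y\in Lu:ya\in N\}$ from Lemma \ref{IsoSimple}, whereas the paper repeats that maximality argument in place (observing that if $N_{1}\neq N_{2}$ then $(N_{1}+N_{2})b=Lub\subset N$, forcing $b=ub\in N$). Your packaging is marginally cleaner since it cites the conclusion of Lemma \ref{IsoSimple} rather than rederiving it.
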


\begin{proof}
Suppose, on the contrary, $b=c$. First of all $u=w$ since otherwise
$b=ub=uc=uwcv=0$, a contradiction. Thus $u=w$ and $N_{1},N_{2}$ are maximal
submodules of $Lu$. Then $N_{1}b\subset N$ and $N_{2}b\subset N$ implies
$(N_{1}+N_{2})b=Lub\subset N$. Since $b=ub$, we get $b\in N$, a contradiction.
\end{proof}

From the preceding Lemmas we get the following Proposition.

\begin{proposition}
\label{Card Iso Simple}(a) Let $Lv/N$ be a given simple left $L$-module, where
$v\in E^{0}$. For any fixed vertex $u$, the cardinality of the set of all
maximal submodules $N^{\prime}$ of $Lu$ (and thus the cardinality of all
maximal left ideals of $L$ of the form $M=N^{\prime}\oplus L(1-u)$ of $L$) for
which $Lu/N^{\prime}\cong Lv/N\cong L/M$ is at most the cardinality of $uLv$.

(b) Given a fixed simple left $L$-module $Lv/N$, the cardinality of the set of
maximal left ideals $M$ of $L$ for which $L/M\cong Lv/N$ is at most the
cardinality of $L$.
\end{proposition}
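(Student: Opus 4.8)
The plan is to derive both parts directly from Lemmas \ref{VertexSimple}, \ref{IsoSimple} and \ref{DistinctSimple}, which have already done the structural work.

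For part (a), fix a vertex $u$. By Lemma \ref{IsoSimple}, every maximal submodule $N^{\prime}$ of $Lu$ with $Lu/N^{\prime}\cong Lv/N$ determines (and is determined by) an element $a=uav\in uLv$ satisfying $a\notin N$ and $N^{\prime}a\subset N$; indeed in that situation $N^{\prime}$ is recovered as $\{y\in Lu: ya\in N\}$. Thus the assignment $N^{\prime}\mapsto a$ (picking, for each such $N^{\prime}$, the element $a$ produced by an isomorphism $\sigma$) is a well-defined map from the set of such $N^{\prime}$ into $uLv$. Lemma \ref{DistinctSimple} says precisely that distinct $N^{\prime}$ (i.e. distinct submodules $B=Lu/N^{\prime}$) give distinct $a$, so this map is injective. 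Hence the number of such $N^{\prime}$ is at most $|uLv|$. Since each maximal left ideal $M$ containing a given vertex $u$ with $L/M\cong Lv/N$ has the form $M=N^{\prime}\oplus L(1-u)$ for a unique maximal submodule $N^{\prime}$ of $Lu$ (Lemma \ref{VertexSimple}), the bound $|uLv|$ applies to the maximal left ideals of that shape as well.

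For part (b), I would sum the estimate in (a) over all vertices $u$. By Lemma \ref{VertexSimple}, any maximal left ideal $M$ with $L/M\cong Lv/N$ omits some vertex $u$ and is then of the form $N^{\prime}\oplus L(1-u)$ with $Lu/N^{\prime}\cong Lv/N$; so the set of all such $M$ is covered by the union, over $u\in E^{0}$, of the sets bounded in part (a). Therefore the cardinality of this set of maximal left ideals is at most $\sum_{u\in E^{0}}|uLv| = \bigl|\bigoplus_{u\in E^{0}} uLv\bigr| \le \bigl|\bigoplus_{u\in E^{0}} uL\bigr| = |L|$, using the decomposition $L=\bigoplus_{u\in E^{0}} uL$ recorded in the preliminaries. (If $L$ is finite there is nothing to prove, so one may assume $L$ infinite, in which case a countable or finite union/sum of sets each of size $\le |L|$ still has size $\le |L|$; when $E^{0}$ is itself large this is just the statement that $\bigoplus_{u} uLv$ embeds in $L$.)

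The only genuinely delicate point is bookkeeping about which vertex to use: a single maximal left ideal $M$ may omit many vertices, so the covering in (b) is not a disjoint union and the map $M\mapsto(u,a)$ is not canonical. This is harmless for an upper bound — I only need that every relevant $M$ appears in at least one of the sets from part (a) — but I would state it carefully so the reader sees that no overcounting issue invalidates the estimate. A secondary routine check is that $uLv$ and $\bigoplus_{u} uLv$ really do have cardinality $\le |L|$, which is immediate from $L=\bigoplus_{v\in E^{0}} vL = \bigoplus_{u\in E^{0}} uL$.
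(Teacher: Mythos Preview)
Your proof is correct and is exactly the argument the paper intends: the paper gives no proof at all, simply stating that the proposition follows ``from the preceding Lemmas'' (namely \ref{VertexSimple}, \ref{IsoSimple} and \ref{DistinctSimple}), and your write-up makes that deduction explicit. The only quibble is that the displayed equality $\sum_{u}|uLv| = \bigl|\bigoplus_{u} uLv\bigr|$ should really be an inequality (the zeros are overcounted on the left), but the bound $\sum_{u}|uLv|\le |L|$ you actually need is immediate since the sets $uLv\setminus\{0\}$ are pairwise disjoint in $L$ and $|E^{0}|\le|L|$.
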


For subsequent applications, we obtain a sharpened version of Lemma
\ref{IsoSimple} as follows.

\begin{lemma}
\label{Annihilator} Let $v\in E^{0}$and $Lv/N$ be a simple left $L$-module.
Then, the maximal left ideals $M$ of $L$ for which $L/M\cong Lv/N$ are
precisely the annihilators in $L$ of non-zero elements $a+N$ of $Lv/N$ with
$a=uav$ for some vertex $u$.
\end{lemma}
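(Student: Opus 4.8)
The plan is to derive this as a reformulation of Lemma~\ref{IsoSimple}, together with the simple description of maximal left ideals from Lemma~\ref{VertexSimple}. The key point is that a maximal left ideal $M$ with $L/M \cong Lv/N$ can, by Lemma~\ref{VertexSimple}, be written as $M = N' \oplus L(1-u)$ for some vertex $u \notin M$, where $N' = M \cap Lu$ is a maximal submodule of $Lu$; and then $Lu/N' \cong L/M \cong Lv/N$. So one direction of the equivalence we want should come from running Lemma~\ref{IsoSimple} and identifying $N'$ explicitly as an annihilator, and the other from checking that every such annihilator arises this way.

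First I would take a maximal left ideal $M$ with an isomorphism $\sigma: L/M \to Lv/N$. Pick a vertex $u \notin M$; then $M = N' \oplus L(1-u)$ with $N' = M \cap Lu$ maximal in $Lu$, and $L/M \cong Lu/N'$. Applying Lemma~\ref{IsoSimple} to $Lu/N' \cong Lv/N$ gives an element $a = uav \in Lv$ with $a \notin N$ and $N' = \{y \in Lu : ya \in N\}$. I then want to identify $M$ itself (not just $N'$) as the annihilator of the element $a + N \in Lv/N$. Indeed, $\mathrm{Ann}_L(a+N) = \{x \in L : xa \in N\}$; writing $x = xu + x(1-u)$ and using $a = ua$, we get $xa = (xu)a$, so $x \in \mathrm{Ann}_L(a+N)$ iff $xu \in N'$, i.e. iff $x \in N' \oplus L(1-u) = M$. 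Hence $M = \mathrm{Ann}_L(a+N)$, and $a + N$ is a nonzero element of $Lv/N$ of the required form $a = uav$.

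Conversely, I would start with a nonzero element $a + N \in Lv/N$ where $a = uav$ for some vertex $u$, and set $M = \mathrm{Ann}_L(a+N) = \{x \in L : xa \in N\}$. The map $L \to Lv/N$, $x \mapsto xa + N$, is a nonzero (since $ua = a \notin N$) $L$-module homomorphism into a simple module, hence surjective, with kernel $M$; so $L/M \cong Lv/N$ and $M$ is a maximal left ideal. This gives the reverse inclusion and completes the equivalence. The only mild subtlety worth spelling out is that $M$ so defined is a genuine (proper) left ideal --- it is proper precisely because $u \notin M$, which holds as $ua = a \notin N$ --- but there is no real obstacle here; the argument is essentially a bookkeeping repackaging of Lemmas~\ref{VertexSimple} and~\ref{IsoSimple}, and the main thing to be careful about is handling the absence of a global identity by always decomposing elements along the idempotent $u$ as $x = xu + x(1-u)$.
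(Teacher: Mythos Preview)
Your proposal is correct and follows essentially the same approach as the paper: the forward direction is identical (Lemma~\ref{VertexSimple} to get $M=N'\oplus L(1-u)$, then Lemma~\ref{IsoSimple} to produce $a=uav$ and identify $M$ as the annihilator), and your converse is a mild streamlining of the paper's --- you apply the first isomorphism theorem directly to the map $L\to Lv/N$, $x\mapsto xa+N$, whereas the paper first restricts to $Lu$, builds the isomorphism $Lu/N'\cong Lv/N$, and then argues $N'\oplus L(1-u)\subseteq \mathrm{Ann}_L(a+N)$ with equality by maximality. Both routes are short and equivalent in content.
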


\begin{proof}
Suppose $L/M\cong Lv/N$ for some maximal left ideal $M$ of $L$. By Lemma
\ref{VertexSimple}, we can write $M=N^{\prime}\oplus L(1-u)$ where $u$ is a
vertex, $u\notin M$ and $N^{\prime}=M\cap Lu$. By Lemma \ref{IsoSimple}, there
is an element $a=uav\notin N$ so that $a+N$ is non-zero and $N^{\prime}=$
$\{y\in Lu:ya\in N\}=\{y\in Lu:y(a+N)=N\}$. It is then clear that $M=\{L\in
L:L(a+N)=N\}.$

Conversely, suppose the left ideal $I$ is the annihilator in $L$ of some
non-zero element $a+N$ of $Lv/N$, where $a\in uLv$ for some vertex $u$. Let
$N^{\prime}=\{y\in Lu:y(a+N)=N\}=$ $\{y\in Lu:ya\in N\}$. Now $N^{\prime}\neq
Lu$ since $u\notin N^{\prime}$ due the fact that $ua=a\notin N$. Define
$\phi:Lu/N^{\prime}\rightarrow Lv/N$ by $\phi(ru+N^{\prime})=rua+N$. Clearly
$\phi$ is a well-defined homomorphism and $\phi\neq0$, as $\phi(u)=ua+N=a+N$.
If $rua+N=N$, then $ru(a+N)=N$, so $ru\in N^{\prime}$ and $ru+N^{\prime
}=N^{\prime}$. Thus $\ker(\phi)=0$. Since $Lv/N$ is simple, $\phi
:Lu/N^{\prime}\rightarrow Lv/N$ is an isomorphism. In particular, $N^{\prime}$
is a maximal $L$-submodule of $Lu$. Then $M=N^{\prime}\oplus L(1-u)$ is a
maximal left ideal of $L$, $L/M\cong Lv/N$ and $M\subset I$. By maximality,
$M=I$, the annihilator of $a+N$ in $L$.
\end{proof}

In the context of Proposition \ref{Card Iso Simple}(b), our next goal is to
investigate the size of the set of all non-isomorphic simple left $L$-modules.
Towards this end, we consider maximal left ideals of $L$ that arise from a
specified Boolean subring of idempotents in $L$.

\textbf{A special Boolean subring }$B$ \textbf{of }$L$ : Let $S=E^{0}%
\cup\{\alpha\alpha^{\ast}:\alpha$ a finite path in $E\}\cup\{0\}$. Observe
that elements of $S$ are commuting idempotents. Moreover, if $a,b\in S$, then
it is easy to see that $ab\in S$. Let $B$ be the additive subgroup of $L$
generated by $S$. Define, for any two elements $a,b\in S$, $a\vartriangle
b=a+b-2ab$ and $a\cdot b=ab$. Then $B$ becomes a Boolean ring under the
operations $\vartriangle$ and $\cdot$.

Define a partial order $\leq$ on $B$ by setting, for any two elements $a,b\in
B$, $a\leq b$ if $ab=a$. Then $B$ becomes a lattice under the operations,
$a\vee b=a+b-ab$ and $a\wedge b=ab$.

\begin{proposition}
\label{Equal max ideals}(a) If $M^{\prime}$ is a maximal left ideal of $L$,
then $M=M^{\prime}\cap B$ is a maximal ideal of $B$ and $M^{\prime}=N\oplus
L(1-v)$ for some vertex $v\notin M^{\prime}$ where $N=M^{\prime}\cap
Lv=M^{\prime}v$ and $M=Mv\oplus B(1-v)$.

(b) Every maximal ideal $M$ of $B$ embeds in a maximal left ideal $P_{M}$ of
$L$ such that $P_{M}\cap B=M$. Thus different maximal ideals $M_{1},M_{2}$ of
$B$ give rise to different maximal left ideals $P_{M_{1}},P_{M_{2}}$.
\end{proposition}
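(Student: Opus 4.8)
The plan is to treat (a) and (b) separately, in both cases using that a vertex $v\in E^{0}$ is a central idempotent of the commutative ring $B$, so $B=Bv\oplus B(1-v)$, and that the corner $Bv\subseteq vLv$ is a Boolean ring with identity $v$. For (a), I would begin from Lemma \ref{VertexSimple}: as $M^{\prime}$ is a maximal left ideal, pick a vertex $v\notin M^{\prime}$ and write $M^{\prime}=N\oplus L(1-v)$ with $N=M^{\prime}\cap Lv=M^{\prime}v$ a maximal $L$-submodule of $Lv$. Put $M=M^{\prime}\cap B$; this is a proper ideal of $B$ since $v\in B\setminus M^{\prime}$. Because $B(1-v)\subseteq L(1-v)\subseteq M^{\prime}$, splitting along $B=Bv\oplus B(1-v)$ yields $M=(N\cap Bv)\oplus B(1-v)$ (so in particular $Mv=M\cap Bv=N\cap Bv$, the last assertion of (a)) and a ring isomorphism $B/M\cong Bv/(N\cap Bv)$. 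Since $Bv$ is a Boolean ring with identity, it then suffices to show that $N\cap Bv$ is a \emph{prime} ideal of $Bv$.

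The crux of (a) is the following claim: if $x,y\in Bv$ with $xy\in N$ and $x\notin N$, then $y\in N$. Indeed, $N$ is maximal in $Lv$ and $x=x^{2}\in Lx$, so $Lx+N=Lv$ and thus $v=zx+n$ for some $z\in L$, $n\in N$; hence $y=vy=z(xy)+ny$. Here $z(xy)\in L\cdot N\subseteq N$ since $xy\in N$ and $N$ is a left submodule, and $ny\in N$ because $y$ is an idempotent of $L$, so $M^{\prime}y\subseteq M^{\prime}$ by Lemma \ref{VertexSimple}, while $y\in Bv\subseteq vLv$ forces $ny\in Lv$, whence $ny\in M^{\prime}\cap Lv=N$. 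Thus $y\in N\cap Bv$, establishing primeness. A prime ideal of a Boolean ring with identity is maximal, so $B/M\cong Bv/(N\cap Bv)\cong\mathbb{F}_{2}$ and $M$ is a maximal ideal of $B$, completing (a).

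For (b), let $M$ be a maximal ideal of $B$, so $B/M\cong\mathbb{F}_{2}$. Since $M\neq B$ and $B$ is additively generated by $S$, some $e\in S$ satisfies $e\notin M$; if $e=\alpha\alpha^{\ast}$ then $s(\alpha)\,\alpha\alpha^{\ast}=\alpha\alpha^{\ast}$ forces $s(\alpha)\notin M$, so in every case there is a vertex $v\notin M$. Then $\bar{v}=1$ in $B/M$, so $b-bv\in M$ for all $b\in B$; hence $B(1-v)\subseteq M$ and $M=(M\cap Bv)\oplus B(1-v)$. Set $N_{0}=L(M\cap Bv)$, a left $L$-submodule of $Lv$ containing $M\cap Bv$. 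I claim $v\notin N_{0}$: otherwise $v=\sum z_{i}m_{i}$ with $m_{i}\in M\cap Bv$, and putting $m=\bigvee_{i}m_{i}\in M\cap Bv$ (so $m_{i}m=m_{i}$) gives $v=(\sum z_{i}m_{i})m=vm$, forcing $v=m\in M$, a contradiction. By Zorn's lemma extend $N_{0}$ to a maximal $L$-submodule $N$ of $Lv$ with $v\notin N$, and set $P_{M}=N\oplus L(1-v)$, which is a maximal left ideal of $L$ as $L/P_{M}\cong Lv/N$ is simple. Then $M\subseteq P_{M}$ because $M\cap Bv\subseteq N_{0}\subseteq N$ and $B(1-v)\subseteq L(1-v)$. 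Conversely, for $b\in P_{M}\cap B$ write $b=bv+b(1-v)$: here $b(1-v)\in B(1-v)\subseteq M$ and $bv\in N\cap Bv$, and if $bv\notin M$ then $\overline{bv}=1=\bar{v}$ in $B/M$ gives $v-bv\in M\cap Bv\subseteq N$, so $v=(v-bv)+bv\in N$, a contradiction; hence $bv\in M$ and $b\in M$. So $P_{M}\cap B=M$, and distinct maximal ideals $M_{1}\neq M_{2}$ of $B$ yield distinct maximal left ideals $P_{M_{1}}\neq P_{M_{2}}$.

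The main obstacle, and the step where care is genuinely needed, is reconciling the one-sided ideal theory of the possibly non-unital ring $L$ with the two-sided ideal theory of the commutative ring $B$: one repeatedly has to see that right multiplications such as $M^{\prime}y$ (or products appearing when one expands $v=zx+n$) by an idempotent $y\in B$ fall back into $M^{\prime}$ or $Lv$, which relies on Lemma \ref{VertexSimple} (maximal left ideals absorb right multiplication by idempotents) together with $B\subseteq\bigoplus_{v}vLv$. One must also interpret maximality of $M$ in $B$ through $B/M\cong\mathbb{F}_{2}$ rather than via a global identity element, since $B$ need not be unital while its corners $Bv$ are. With these points secured, the remainder is routine bookkeeping with the decompositions $L=Lv\oplus L(1-v)$ and $B=Bv\oplus B(1-v)$.
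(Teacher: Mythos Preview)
Your proof is correct and follows the paper's strategy almost exactly: both parts hinge on the same key moves (using Lemma~\ref{VertexSimple} to absorb right multiplication by an idempotent in the primeness argument of (a), and the join trick $m=\bigvee_i m_i$ to show $v\notin LMv$ in (b)). The only cosmetic differences are that in (a) you first pass to the unital corner $Bv$ and prove $N\cap Bv$ is prime there, whereas the paper proves primeness of $M$ directly in $B$, and in (b) you verify $P_M\cap B=M$ explicitly via $B/M\cong\mathbb{F}_2$ while the paper simply invokes maximality of $M$ against the proper ideal $P_M\cap B$.
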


\begin{proof}
(a) If $M^{\prime}$ is a maximal left ideal of $L$, then clearly $M=M^{\prime
}\cap B$ is an ideal of $B$. To show that $M$ is maximal, it is enough if we
show that $M$ is a prime ideal of $B$. Suppose $x,y\in B$ such that $xy\in M$
and $x\notin M$. Since $Lx+M^{\prime}=L$, we can write $y=rx+m^{\prime}$ where
$r\in L$ and $m^{\prime}\in M^{\prime}$. Then $y=y^{2}=rxy+m^{\prime}y$. By
Lemma \ref{VertexSimple}, $m^{\prime}y\in M^{\prime}y\subset M^{\prime}$ and
so $y\in M^{\prime}\cap B=M$. Thus $M$ is a maximal ideal of $B$. Let $v$ be a
vertex with $v\notin M^{\prime}$. By Lemma \ref{VertexSimple}, $M^{\prime
}=N\oplus L(1-v)$ where $N=M^{\prime}v$. Note that $v\in B$ and $Mv\subset M$,
as $M$ is an ideal. Thus $M=Mv\oplus M(1-v)\subset Mv\oplus B(1-v)$. By
maximality, $M=Mv\oplus B(1-v)$.

(b) Let $M$ be a maximal ideal of $B$. Then there is at least one vertex
$v\notin M$. Because if $E^{0}\subset M$, then for every path $\alpha$ with,
say $s(\alpha)=u$, $\alpha\alpha^{\ast}=u\alpha\alpha^{\ast}\in M$, as $M$ is
an ideal of $B$. This implies $M=B$, a contradiction. We now claim that the
left ideal $LMv\neq Lv$. Suppose, by way of contradiction, assume that $v\in
LMv$, so that $v=%
{\textstyle\sum\limits_{i=1}^{k}}
r_{i}m_{i}v$ where $m_{i}\in M$ and $r_{i}\in L$. Observing that $m=m_{1}%
\vee\cdot\cdot\cdot\vee m_{k}$ belongs to the ideal $M$ and satisfies
$m_{i}m=m_{i}$ for all $i$, we get $mv=vm=%
{\textstyle\sum\limits_{i=1}^{k}}
r_{i}m_{i}mv=%
{\textstyle\sum\limits_{i=1}^{k}}
r_{i}m_{i}v=v$. This is not possible, since $mv\in M$ while $v\notin M$. Thus
$LMv$ is a proper $L$-submodule of $Lv$ and hence can be embedded in a maximal
$L$-submodule $N$ of $Lv$. Writing each element $x\in M$ as $x=xv+(x-xv)$ we
see that $M$ embeds in the maximal left ideal $P_{M}=N\oplus L(1-v)$. By the
maximality of $M$, it is clear that $P_{M}\cap B=M$. This implies that if
$M_{1}\neq M_{2}$ are maximal ideals of $B$ embedding, as above, in maximal
left ideals $P_{M_{1}}$ and $P_{M_{2}}$ of $L$, then $P_{M_{1}}\neq P_{M_{2}}$.
\end{proof}

\begin{corollary}
\label{card max ideals}The cardinality of the set of all maximal left ideals
of $L$ is at least the cardinality of the set of all maximal ideals of $B$.
\end{corollary}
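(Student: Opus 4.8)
The plan is to read off the inequality directly from Proposition \ref{Equal max ideals}(b). That proposition provides, for each maximal ideal $M$ of $B$, a maximal left ideal $P_{M}$ of $L$ with the property that $P_{M}\cap B=M$, together with the assertion that distinct maximal ideals $M_{1}\neq M_{2}$ of $B$ yield distinct maximal left ideals $P_{M_{1}}\neq P_{M_{2}}$. So the first and essentially only step is to package this as a function
\[
\Phi:\{\text{maximal ideals of }B\}\longrightarrow\{\text{maximal left ideals of }L\},\qquad M\longmapsto P_{M},
\]
and observe that $\Phi$ is injective. Injectivity is immediate: if $P_{M_{1}}=P_{M_{2}}$ then intersecting with $B$ gives $M_{1}=P_{M_{1}}\cap B=P_{M_{2}}\cap B=M_{2}$. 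Since an injection exists from the domain to the codomain, the cardinality of the set of maximal left ideals of $L$ is at least that of the set of maximal ideals of $B$, which is the claim.

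There is essentially no obstacle here, since the real work was already carried out in Proposition \ref{Equal max ideals}. The only point worth a word of care is that the construction of $P_{M}$ in the proof of Proposition \ref{Equal max ideals}(b) involves a choice (one picks a vertex $v\notin M$ and then embeds the proper submodule $LMv$ of $Lv$ into \emph{some} maximal $L$-submodule $N$ of $Lv$, setting $P_{M}=N\oplus L(1-v)$), so $\Phi$ is not canonical; but for the cardinality statement this is irrelevant, as \emph{any} selection of such $P_{M}$'s gives a well-defined injective map because the defining relation $P_{M}\cap B=M$ holds for every admissible choice. Hence one fixes any such selection once and for all and concludes as above.
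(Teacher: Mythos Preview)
Your proof is correct and is exactly the argument the paper intends: the corollary is stated immediately after Proposition \ref{Equal max ideals} with no separate proof, precisely because the injection $M\mapsto P_{M}$ you describe (with $P_{M}\cap B=M$ forcing injectivity) is the evident consequence of part (b). Your remark about the non-canonical choice of $P_{M}$ is accurate and matches how the paper subsequently handles it (fixing one such selection to form the set $\mathbf{T}$).
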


For each maximal ideal $M$ of $B$, choose one maximal left ideal
$P_{M}=N\oplus L(1-v)$ where $N=P_{M}v$ as constructed in Proposition
\ref{Equal max ideals}(b). Let $\boldsymbol{T}$ denote the set of all such
maximal left ideals $P_{M}$ of $L$. We shall call such $P_{M}$ a
\textit{Boolean maximal left ideal corresponding to the maximal ideal }%
$M$\textbf{ }\textit{of}\textbf{ }$B$ and call the simple module $L/P_{M}$ a
Boolean simple module.

From Proposition \ref{Equal max ideals} it is clear that for each vertex $v$
there is a Boolean maximal left ideal $P_{M}=P_{M}v\oplus L(1-v)$ not
containing $v$. Because, given $v$ we can find a maximal left ideal $Q$ of $L$
not containing $v$. Clearly $Q\cap B=M$ is a maximal ideal in $B$ not
containing $v$. Then proceed as on Proposition \ref{Equal max ideals}(b), to
construct the Boolean maximal left ideal $P_{M}$ corresponding to $M$ and, as
noted there, $P_{M}=P_{M}v\oplus L(1-v)$.

\begin{proposition}
\label{Cardinality Boolean simple} Let $Lv/N$ be a fixed simple left
$L$-module where $v\in E^{0}$ and $N$ is a maximal $L$-submodule of $Lv$. Let
$\mathbf{S}_{v,N}=\{P_{M}\in\mathbf{T}:L/P_{M}\cong Lv/N\}$. Let
$\sigma=|\mathbf{S}_{v,N}|$ and write $\mathbf{S}_{v,N}=\{P_{M_{\alpha}%
}=P_{M_{\alpha}}v_{\alpha}\oplus L(1-v_{\alpha}):v_{\alpha}\in E^{0}%
,\alpha<\sigma\}$.Then

(a) $|\mathbf{S}_{v,N}|\leq\dim_{K}(Lv/N)$;

(b) the cardinality of the set of all maximal left ideals $P$ of $L$ such that
$L/P\cong Lv/N$ is $\leq%
{\textstyle\sum\limits_{\alpha<\sigma}}
|v_{\alpha}Lv_{\alpha}|$.
\end{proposition}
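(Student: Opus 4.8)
The plan is to deduce both statements from the machinery already built in Lemmas~\ref{IsoSimple}, \ref{DistinctSimple} and \ref{Annihilator}. The central idea is that each $P_{M_\alpha}\in\mathbf{S}_{v,N}$, being a maximal left ideal with $L/P_{M_\alpha}\cong Lv/N$ and with distinguished vertex $v_\alpha\notin P_{M_\alpha}$, is (by Lemma~\ref{Annihilator}) the annihilator in $L$ of a non-zero element $a_\alpha+N$ of $Lv/N$ with $a_\alpha=v_\alpha a_\alpha v$. So to each index $\alpha<\sigma$ we attach the element $b_\alpha:=a_\alpha+N\in Lv/N$; note $b_\alpha\neq 0$. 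First I would verify that $\alpha\mapsto b_\alpha$ is injective: if $b_\alpha=b_\beta$ then in particular $v_\alpha b_\alpha = v_\alpha b_\beta$ forces $v_\alpha=v_\beta$ (otherwise $v_\alpha b_\beta = v_\alpha v_\beta a_\beta v + N = N$, contradicting $v_\alpha a_\alpha v\notin N$), and then $P_{M_\alpha}=\mathrm{Ann}_L(b_\alpha)=\mathrm{Ann}_L(b_\beta)=P_{M_\beta}$, i.e.\ $M_\alpha=M_\beta$ by Proposition~\ref{Equal max ideals}(b). This is exactly the content of Lemma~\ref{DistinctSimple} specialized to our situation, so the injectivity step is essentially free. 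It gives an injection of $\mathbf{S}_{v,N}$ into the set of non-zero elements of $Lv/N$.

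For part (a), the bound $|\mathbf{S}_{v,N}|\le \dim_K(Lv/N)$ would then follow if the chosen elements $b_\alpha$ could be taken $K$-linearly independent, or more simply by noting that the nonzero vectors $b_\alpha$ can be chosen one per scaling class and arranged so that no $b_\alpha$ lies in the $K$-span of the others — but the cleanest route is: we may normalize, replacing $a_\alpha$ by a scalar multiple, since $\mathrm{Ann}_L(b_\alpha)=\mathrm{Ann}_L(kb_\alpha)$ for $0\neq k\in K$, so the map $\alpha\mapsto Kb_\alpha$ into the set of $1$-dimensional subspaces of $Lv/N$ is still injective by the argument above (if $Kb_\alpha = Kb_\beta$ then $b_\alpha$ and $b_\beta$ have the same annihilator). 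Now I claim the $b_\alpha$ can in fact be chosen $K$-linearly independent: proceeding by transfinite recursion, having chosen $\{b_\gamma:\gamma<\alpha\}$ linearly independent, if $b_\alpha$ (any representative) lay in their span we would get a dependency; but since $Lv/N$ is simple and each $b_\gamma$ generates it, this requires a short argument — actually the safe and standard move is simply: the distinct one-dimensional subspaces $Kb_\alpha$ are in bijection with $\mathbf{S}_{v,N}$, and a $K$-vector space of dimension $d$ has at most $\max(d,|K|)$ such subspaces, which is $\le d$ when $d$ is infinite and $|K|\le d$... Since this can be delicate when $K$ is large relative to $\dim_K(Lv/N)$, the honest statement I would prove is $|\mathbf{S}_{v,N}|\le\dim_K(Lv/N)$ by exhibiting the $b_\alpha$ as genuinely linearly independent, which works because a minimal dependence $\sum k_i b_{\alpha_i}=0$ among them, combined with all $b_{\alpha_i}$ having pairwise distinct annihilators (maximal left ideals), leads to a contradiction via the standard argument that elements with distinct annihilator maximal left ideals in a simple module are linearly independent.

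For part (b), I would invoke Proposition~\ref{Equal max ideals}(a)--(b) together with Proposition~\ref{Card Iso Simple}(a): every maximal left ideal $P$ of $L$ with $L/P\cong Lv/N$ satisfies $P\cap B = M$ for some maximal ideal $M$ of $B$, and $M$ is one of the $M_\alpha$ (since $L/P_M\cong L/P$ would require the Boolean representative $P_{M}$ to also satisfy $L/P_M\cong Lv/N$, putting $M$ among the $M_\alpha$). Thus every such $P$ has $P\cap B=M_\alpha$ for some $\alpha<\sigma$. For a fixed $\alpha$, all maximal left ideals $P$ with $P\cap B=M_\alpha$ contain $LM_\alpha v_\alpha$ and do not contain $v_\alpha$, so $P=P\cap Lv_\alpha \oplus L(1-v_\alpha)$ with $P\cap Lv_\alpha$ a maximal submodule of $Lv_\alpha$ containing $LM_\alpha v_\alpha$, and $Lv_\alpha/(P\cap Lv_\alpha)\cong Lv/N$; by Proposition~\ref{Card Iso Simple}(a) the number of such is at most $|v_\alpha L v_\alpha|$. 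Summing over $\alpha<\sigma$ gives the bound $\sum_{\alpha<\sigma}|v_\alpha L v_\alpha|$. The main obstacle I anticipate is the careful bookkeeping in part (a) when $K$ is uncountable — making sure the inequality is stated and proved for $\dim_K$ and not merely for cardinality — and verifying that the representative $P_{M_\alpha}$ genuinely captures all $M$ with $L/P_M\cong Lv/N$; both are handled by the linear-independence argument and by Proposition~\ref{Equal max ideals} respectively, but they need to be spelled out rather than waved through.
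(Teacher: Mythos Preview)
Your handling of part~(b) is essentially the paper's own argument, and the preparatory step of attaching to each $P_{M_\alpha}$ an element $b_\alpha=a_\alpha+N\in Lv/N$ via Lemma~\ref{Annihilator} is exactly right. The gap is in part~(a), at the linear-independence step.

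You appeal to ``the standard argument that elements with distinct annihilator maximal left ideals in a simple module are linearly independent.'' That statement is false in general: in the simple $M_2(K)$-module $K^2$, the vectors $(1,0)$, $(0,1)$, $(1,1)$ have three pairwise distinct annihilators (all maximal left ideals), yet they are linearly dependent. So neither a minimal-dependence argument nor a transfinite-recursion argument based only on distinctness of annihilators will work; there is no such standard fact. Likewise, the alternative route through one-dimensional subspaces $Kb_\alpha$ cannot give a bound by $\dim_K(Lv/N)$ when $|K|$ is large, as you yourself suspected.

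What you are missing is precisely the feature that distinguishes the $P_{M_\alpha}$ from arbitrary maximal left ideals: each one meets $B$ in the maximal ideal $M_\alpha$, and $B$ is a Boolean ring. Distinct maximal ideals in a Boolean ring are automatically comaximal, so the Chinese Remainder Theorem applies: given finitely many $M_1,\dots,M_{n+1}$ one can pick $b\in\bigcap_{i=1}^{n}M_i$ with $b\notin M_{n+1}$. Since $P_{M_j}\cap B=M_j$, this $b$ lies in $\bigcap_{i\le n}P_{M_i}$ but not in $P_{M_{n+1}}$. Multiplying a putative dependence $\sum_{j=1}^{n+1}k_jb_j=0$ on the left by $b$ kills $b_1,\dots,b_n$ and leaves $k_{n+1}\,b\cdot b_{n+1}=0$ with $b\cdot b_{n+1}\neq 0$, forcing $k_{n+1}=0$; iterate. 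This is the paper's argument, and it is exactly the place where the Boolean structure of $B$ (rather than merely the distinctness of the $P_{M_\alpha}$) is used.
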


\begin{proof}
(a) By Lemma \ref{Annihilator}, each $P_{M_{j}}\in\mathbf{S}_{v,N}$
annihilates an element $x_{j}=a_{j}+N\in Lv/N$. Regarding $Lv/N$ as a
$K$-vector space, we claim that these elements $x_{j}$ (corresponding to the
various $P_{M_{j}}\in\mathbf{S}_{v,N}$) must be $K$-independent. To justify
this, suppose a finite subset of the elements $x_{j}$, with $j=1,...,n+1$,
satisfy
\[%
{\textstyle\sum\limits_{j=1}^{n+1}}
k_{j}x_{j}=0............(\ast)
\]
where, for each $j$, $k_{j}\in K$ and the maximal ideal $P_{M_{j}}$ is the
corresponding annihilator of the element $x_{j}$. Observe that the maximal
ideals of $B$ satisfy the Chinese remainder theorem and so, corresponding to
the finite set $M_{1},\cdot\cdot\cdot,M_{n},M_{n+1}$ of maximal ideals of $B$,
there is an element $b\in\cap_{i=1}^{n}M_{i}$ such that $b\notin M_{n+1}$ so
that the ideal generated by $\cap_{i=1}^{n}M_{i}$ and $M_{n+1}$ is $B$. Since
the vertex set $E^{0}\subset B$, we then see that $(\cap_{j=1}^{n}P_{M_{j}%
})+P_{M_{n+1}}=L$ and so there is an element $a\in\cap_{j=1}^{n}P_{M_{j}}$,
but $a\notin P_{M_{n+1}}$. Since $a$ annihilates $x_{1},\cdot\cdot\cdot,x_{n}%
$, multiplying the equation $(\ast)$ on the left by the element $a$, we get
$k_{n+1}ax_{n+1}=0$ which implies $k_{n+1}=0$. Proceeding like this, we
establish the independence of the elements $x_{j}$. Thus the elements $x_{j}$
can be regarded as part of a basis of $Lv/N$. Since distinct maximal left
ideals $P_{M_{j}}$ correspond to different such elements $x_{j}$ in a basis of
$LvN$ (Lemma \ref{DistinctSimple}), we conclude that $|\mathbf{S}_{v,N}%
|\leq\dim_{K}(Lv/N)$.
\end{proof}

(b) Now, for a fixed $\alpha$, Proposition \ref{Card Iso Simple}(a) implies
that the cardinality of the set of all the maximal left ideals $P$ with $P\cap
B=P_{M_{\alpha}}\cap B$ (so $P=Pv_{\alpha}\oplus L(1-v_{\alpha})$) and
satisfying $L/P\cong L/P_{M_{\alpha}}$ ($\cong Lv/N$) is $\leq|v_{\alpha
}Lv_{\alpha}|$. So the cardinality of the set of all maximal left ideals $P$
such that $L/P$ is isomorphic to $Lv/N$ is $\leq%
{\textstyle\sum\limits_{\alpha<\sigma}}
|v_{\alpha}Lv_{\alpha}|$.

\begin{lemma}
\label{No min Element}If $Soc(L)=0$ and the graph $E$ satisfies Condition (L),
then the Boolean ring $B$ is atomless, that is, it has no minimal elements.
\end{lemma}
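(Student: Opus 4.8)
The plan is to show that every nonzero element of $B$ has a nonzero element of $B$ strictly below it in the order $\leq$. The first step is a reduction: \emph{every nonzero $b\in B$ satisfies $s\leq b$ for some nonzero $s\in S$}. Since products of elements of $S$ again lie in $S$, the set $S$ is a meet-closed generating set of the Boolean ring $B$, so $b$ is a finite Boolean combination of elements of $S$; choosing a constituent $\gamma\gamma^{\ast}$ of maximal path-length and, when it does not already lie below $b$, using relation $(4)$ to replace $b$ by a combination of smaller maximal length, a routine induction produces a nonzero $s\in S$ with $sb=s$. Granting this, it is enough to prove that \emph{no nonzero element of $S$ is an atom of $B$}: for then, given a nonzero $b\in B$, pick a nonzero $s\in S$ with $s\leq b$ and, since $s$ is not an atom, an element $u\in B$ with $0<u<s$; now $u\leq b$ by transitivity and $u\neq b$, since $b=u$ would give $b=u<s\leq b$. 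Hence $b$ is not an atom either.

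So fix $0\neq s\in S$ and write $s=\alpha\alpha^{\ast}$ for a finite path $\alpha$, where $\alpha$ of length $0$ corresponds to $s$ being a vertex, and set $v=r(\alpha)$. Since $Soc(L)=0$, the vertex $v$ is not a line point, so $T(v)$ contains either a vertex at which there is a bifurcation, or the base of a cycle $c$. In the second case Condition (L) gives an exit $g$ for $c$; then $s(g)$ is a vertex of the cycle, hence lies in $T(v)$, and it emits $g$ together with an edge of $c$, so there is a bifurcation at $s(g)$. In either case we obtain a finite path $\mu$ with $s(\mu)=v$ whose range $w:=r(\mu)$ emits two distinct edges $e\neq f$. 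Put $t=\alpha\mu ee^{\ast}\mu^{\ast}\alpha^{\ast}$; since $t=\nu\nu^{\ast}$ with $\nu=\alpha\mu e$ a path, $t\in S$ and $t\neq 0$, while $t\cdot\alpha\alpha^{\ast}=\alpha\mu ee^{\ast}\mu^{\ast}r(\alpha)\alpha^{\ast}=t$, so $t\leq s$. Finally $t\neq s$: if $t=s$, multiplying on the left by $(\alpha\mu)^{\ast}$ and on the right by $\alpha\mu$ and simplifying with relations $(1)$--$(3)$ yields $ee^{\ast}=w$, whence $f=wf=e(e^{\ast}f)=0$, which is impossible. Thus $0<t<s$, so $s$ is not an atom; by the reduction, $B$ is atomless.

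The essential point, and the only place both hypotheses are used, is the graph-theoretic step: the failure of the line-point condition at $v$ only guarantees a bifurcation \emph{or} a cycle in $T(v)$, and Condition (L) is precisely what upgrades a cycle to a genuine bifurcation at a vertex of $T(v)$ — this is what forces $\alpha\alpha^{\ast}$ to dominate a strictly smaller element of $S$, after which one must still verify in $L_{K}(E)$ that the element produced is \emph{strictly} below $\alpha\alpha^{\ast}$ rather than equal to it. The reduction to elements of $S$ is routine bookkeeping with path-lengths and relation $(4)$.
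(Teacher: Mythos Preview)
Your argument is correct and follows the same essential strategy as the paper: reduce to elements of the form $\gamma\gamma^{\ast}$, use $Soc(L)=0$ together with Condition~(L) to locate a bifurcation in $T(r(\gamma))$, and then exhibit a strictly smaller idempotent $\gamma\delta e(\gamma\delta e)^{\ast}$.

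The only real difference is organizational. The paper argues by contradiction: assuming a minimal $m$ exists, the minimality itself forces $\alpha_{1}\alpha_{1}^{\ast}m=m$ (since $\alpha_{1}\alpha_{1}^{\ast}m\leq m$ and lies in $\{0,\alpha_{1}\alpha_{1}^{\ast}\}$), so $m=\gamma\gamma^{\ast}$ immediately. You work directly and therefore need the stronger reduction ``every nonzero $b\in B$ dominates a nonzero $s\in S$''. Your sketch of this step is rather brief, and the phrase ``using relation~(4) to replace $b$ by a combination of smaller maximal length'' is not quite right as stated---the CK-2 relation replaces a vertex by a sum of longer $ee^{\ast}$'s, so it \emph{increases} path-length rather than decreasing it. The cleaner way to justify your reduction is: with $\alpha_{1}$ of maximal length one always has $\alpha_{1}\alpha_{1}^{\ast}b\in\{0,\alpha_{1}\alpha_{1}^{\ast}\}$; if it is $\alpha_{1}\alpha_{1}^{\ast}$ you are done, and if it is $0$ one shows $b$ can be rewritten with strictly fewer monomials (using the resulting coefficient relation $\sum_{\alpha_j\,\text{init}\,\alpha_1}t_j=0$) and inducts on that count. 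This is what the paper's minimality shortcut buys. Your main argument---constructing $t=\alpha\mu e(\alpha\mu e)^{\ast}$ and checking $t\neq s$ by multiplying through to obtain $ee^{\ast}=w$ and hence $f=0$---is exactly the paper's computation, just run forward rather than toward a contradiction.
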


\begin{proof}
Since $Soc(L)=0$, the graph $E$ cannot have any line points and, in
particular, has no sinks. Suppose, by way of contradiction, $B$ has a minimal
element $m$ so that, for all $b\in B$, either $mb=0$ or $mb=m$. In order to
reach a contradiction, we first claim that $m$ can be taken to be a monomial
of the form $\gamma\gamma^{\ast}$ for some path $\gamma$. To see this, if
$m=v$ is a vertex, then as $v$ is not a sink, it will be the source of some
path $\alpha$ and in that case $m=m\alpha\alpha^{\ast}=v\alpha\alpha^{\ast
}=\alpha\alpha^{\ast}$. Likewise, suppose $m=%
{\textstyle\sum\limits_{i=1}^{k}}
t_{i}\alpha_{i}\alpha_{i}^{\ast}$ where $t_{i}\in K$ and $\alpha_{i}\alpha
_{i}^{\ast}\neq\alpha_{j}\alpha_{j}^{\ast}$ for all $i,j$. Assume, without
loss of generality that $\alpha_{1}$ is of maximal length. Then, observing
that $\alpha_{1}\alpha_{1}^{\ast}\alpha_{i}\alpha_{i}^{\ast}\neq0$ implies
that $\alpha_{1}\alpha_{1}^{\ast}\alpha_{i}\alpha_{i}^{\ast}=\alpha_{1}%
\alpha_{1}^{\ast}$, we conclude that $m=\alpha_{1}\alpha_{1}^{\ast}m=%
{\textstyle\sum\limits_{j=1}^{r}}
\alpha_{1}\alpha_{1}^{\ast}=r\alpha_{1}\alpha_{1}^{\ast}$. Since $m^{2}=m$,
$\ r=1$ and we conclude that $m=\alpha_{1}\alpha_{1}^{\ast}$. We thus conclude
that $m=\gamma\gamma^{\ast}$ for some path $\gamma$ with $s(\gamma)=u$.
Suppose $r(\gamma)=w$ ($w$ may be equal to $u)$. Since $w$ cannot be a line
point, there is a vertex in $T(w)$ which is either a bifurcation vertex or is
the base of a cycle in $T(w)$. Since every cycle has an exit (due to Condition
(L)), $T(w)$ will always contain a bifurcation vertex $w^{\prime}$ with
$w^{\prime}=s(e)=s(f)$ for some edges $e\neq f$. Denoting a path from $w$ to
$w^{\prime}$ by $\delta$, we obtain, by the minimality of $m=\gamma
\gamma^{\ast}$,%
\[
\gamma\gamma^{\ast}=\gamma\gamma^{\ast}\gamma\delta ee^{\ast}\delta^{\ast
}\gamma^{\ast}.
\]
Multiplying on the right by $\gamma\delta f$, we get $\gamma\gamma^{\ast
}\gamma\delta f=\gamma\gamma^{\ast}\gamma\delta ee^{\ast}\delta^{\ast}%
\gamma^{\ast}\gamma\delta f$ \ and from this we get $\gamma\delta
f=\gamma\gamma^{\ast}\gamma\delta ee^{\ast}f=0$, a contradiction. This proves
that the Boolean ring $B$ has no minimal element.
\end{proof}

\begin{theorem}
\label{Countable dimesion}Let $E$ be an arbitrary graph satisfying Condition
(L). If $L=L_{K}(E)$ is a countable dimensional $K$-algebra with $Soc(L)=0$,
then $L$ has at least $2^{\aleph_{0}}$ distinct isomorphism classes of simple
left $L$-modules.
\end{theorem}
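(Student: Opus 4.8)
The plan is to produce $2^{\aleph_{0}}$ pairwise non-isomorphic \emph{Boolean} simple modules $L/P_{M}$. Two facts combine: the Boolean ring $B$ of Section~3 has at least $2^{\aleph_{0}}$ maximal ideals, so by Proposition~\ref{Equal max ideals}(b) (cf. Corollary~\ref{card max ideals}) the set $\boldsymbol{T}$ of Boolean maximal left ideals has $|\boldsymbol{T}|\ge 2^{\aleph_{0}}$; while each isomorphism class of simple modules meets $\boldsymbol{T}$ in at most countably many members, because $\dim_{K}L\le\aleph_{0}$ forces $\dim_{K}(Lv/N)\le\aleph_{0}$ and then Proposition~\ref{Cardinality Boolean simple}(a) applies.

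First I would count maximal ideals of $B$. Since $Soc(L)=0$ and $E$ satisfies Condition~(L), Lemma~\ref{No min Element} shows $B$ is atomless, and $B\ne 0$ since $E^{0}\subseteq B$ and $L\ne 0$. In a non-zero atomless Boolean ring one builds a dyadic tree of idempotents: fix $0\ne b_{\varnothing}\in B$ and, recursively, given a non-zero $b_{s}$ indexed by a finite $0$--$1$ string $s$, use atomlessness to pick $c$ with $0\ne c<b_{s}$ (in the order $a\le b\iff ab=a$) and set $b_{s0}=c$, $b_{s1}=b_{s}\vartriangle c$; then $b_{s0},b_{s1}$ are non-zero, $b_{s0}\wedge b_{s1}=0$ and $b_{s0}\vee b_{s1}=b_{s}$. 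For each $\xi\in\{0,1\}^{\mathbb{N}}$ the chain $b_{\xi|_{0}}\ge b_{\xi|_{1}}\ge\cdots$ consists of non-zero elements, so its finite meets are non-zero; hence the filter it generates is proper and extends to an ultrafilter of $B$, with complementary maximal ideal $M_{\xi}$ (so $b_{\xi|_{n}}\notin M_{\xi}$ for all $n$). If $\xi\ne\eta$ and $n$ is least with $\xi(n)\ne\eta(n)$, then $b_{\xi|_{n+1}}$ and $b_{\eta|_{n+1}}$ are disjoint with $b_{\xi|_{n+1}}\notin M_{\xi}$ and $b_{\eta|_{n+1}}\notin M_{\eta}$; were $M_{\xi}=M_{\eta}$, then since maximal ideals of a Boolean ring are prime their product would lie outside that ideal, i.e. $0\notin M_{\xi}$, which is absurd. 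So $\xi\mapsto M_{\xi}$ is injective, $B$ has at least $2^{\aleph_{0}}$ maximal ideals, and by Proposition~\ref{Equal max ideals}(b) the associated $P_{M_{\xi}}$ are pairwise distinct, giving $|\boldsymbol{T}|\ge 2^{\aleph_{0}}$.

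Next I would bound isomorphism classes. Every $L/P$ with $P\in\boldsymbol{T}$ is isomorphic to some $Lv/N$ with $v\in E^{0}$ and $N$ a maximal submodule of $Lv$ (Lemma~\ref{VertexSimple}); since $Lv$ is a $K$-subspace of the countable-dimensional algebra $L$, $\dim_{K}(Lv/N)\le\aleph_{0}$, so Proposition~\ref{Cardinality Boolean simple}(a) gives $|\mathbf{S}_{v,N}|=|\{P\in\boldsymbol{T}:L/P\cong Lv/N\}|\le\aleph_{0}$. Thus $\boldsymbol{T}$ is partitioned, according to the isomorphism type of $L/P$, into blocks of cardinality at most $\aleph_{0}$. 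If there were only $\kappa<2^{\aleph_{0}}$ such blocks, then $|\boldsymbol{T}|\le\aleph_{0}\cdot\kappa<2^{\aleph_{0}}$, contradicting $|\boldsymbol{T}|\ge 2^{\aleph_{0}}$. Hence there are at least $2^{\aleph_{0}}$ blocks, i.e. at least $2^{\aleph_{0}}$ distinct isomorphism classes of simple left $L$-modules.

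The step needing the most care is the passage from "$2^{\aleph_{0}}$ distinct maximal left ideals $P_{M_{\xi}}$" to "$2^{\aleph_{0}}$ non-isomorphic simple modules": distinct maximal left ideals can yield isomorphic quotients, and it is exactly the dimension estimate of Proposition~\ref{Cardinality Boolean simple}(a), fed by the countable-dimensionality hypothesis, that stops one isomorphism class from absorbing too many of the $P_{M_{\xi}}$. The dyadic-tree construction of maximal ideals of an atomless Boolean ring is standard, but I would want to check the relative-complement bookkeeping in the possibly non-unital $B$, which is handled by using $a\vartriangle b$ in place of literal complements.
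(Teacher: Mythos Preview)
Your argument is correct and follows the same overall architecture as the paper's proof: establish that the Boolean ring $B$ has $2^{\aleph_{0}}$ maximal ideals, lift these via Proposition~\ref{Equal max ideals}(b) to $2^{\aleph_{0}}$ distinct Boolean maximal left ideals of $L$, and then use Proposition~\ref{Cardinality Boolean simple}(a) together with countable $K$-dimension to show that each isomorphism class of simples can contain at most countably many of these, forcing $2^{\aleph_{0}}$ classes.

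The one genuine difference is in how you obtain the $2^{\aleph_{0}}$ maximal ideals of $B$. The paper observes that, since $L$ has countable dimension, $B$ is a \emph{countable} atomless Boolean ring without identity, and then invokes Stone's classical results: the maximal ideal space of such a ring is, after one-point compactification, homeomorphic to the Cantor set and hence has cardinality $2^{\aleph_{0}}$. You instead build a dyadic tree of idempotents directly inside $B$ and, for each branch $\xi\in\{0,1\}^{\mathbb N}$, produce a maximal ideal $M_{\xi}$ avoiding that branch; disjointness at the first split shows the $M_{\xi}$ are pairwise distinct. Your route is more elementary and self-contained (no Stone duality or topology), and it does not even use the countability of $B$, only that $B$ is non-zero and atomless; the paper's route is shorter on the page because it outsources the work to \cite{K} and \cite{MHS}. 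Your caution about the non-unital setting is well placed: the cleanest way to phrase the extension step is to apply Zorn's lemma directly to ideals of $B$ that avoid every $b_{\xi|_n}$, and then check (using that a maximal ideal of a Boolean ring is prime) that a maximal such ideal is in fact a maximal ideal of $B$; your relative-complement bookkeeping with $\vartriangle$ handles this.
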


\begin{proof}
Consider the Boolean ring $B$ of $L$ defined earlier. By Lemma
\ref{No min Element}, the Boolean ring $B$ has no minimal elements. Thus $B$
is a countable atomless Boolean ring without identity. In this case, it is
well-known (see \cite{K} or Theorems 1, 8 and 13 in \cite{MHS}) that the space
$X$ of all maximal ideals of $B$ is a locally compact totally disconnected
Hausdorff space with no isolated points. Let $X^{\ast}$ be the one-point
compactification of $X$ obtained by the adjunction of a single non-isolated
point to $X$. Now $X^{\ast}$ is homeomorphic to the Cantor set (see
\cite{MHS}) and so $X^{\ast}$, and hence $X$, has cardinality $2^{\aleph_{0}}%
$. Thus $B$ has $2^{\aleph_{0}}$ distinct maximal ideals. From Corollary
\ref{card max ideals} we conclude that there is a set $\mathbf{T}$\textbf{ }of
$2^{\aleph_{0}}$ distinct Boolean maximal left ideals of $L$ obtained from the
ideals of $B$. Now for any given maximal ideal $P=N\oplus L(1-v)\in\mathbf{T}%
$, the set $S_{P}=\{Q\in\mathbf{T}:L/Q\cong L/P\}$ is countable since $L$ and
hence $Lv/N$ has countable $K$-dimension and, by Proposition
\ref{Cardinality Boolean simple}, $|S_{P}|\leq\dim_{K}(Lv/N)$. Since
$|\mathbf{T|=}2^{\aleph_{0}}$, $L$ has $2^{\aleph_{0}}$ non-isomorphic Boolean
simple $L$-modules of the form $L/P$ where $P\in\mathbf{T}$. Consequently, $L$
has at least $2^{\aleph_{0}}$ non-isomorphic simple left $L$-modules.
\end{proof}

\begin{corollary}
Let $E$ be an arbitrary graph. If $L=L_{K}(E)$ is a simple countable
dimensional $K$-algebra, then $L$ has either exactly $1$ or at least
$2^{\aleph_{0}}$ distinct isomorphism classes of simple left $L$-modules.
\end{corollary}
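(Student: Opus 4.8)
The plan is to deduce this from Theorem \ref{Countable dimesion} by splitting into two cases according to the socle of $L$. Since $L$ is a simple ring, the two-sided ideal $Soc(L)$ is either $0$ or all of $L$, and I treat these separately.

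If $Soc(L)=0$, I would invoke the standard structure theory of simple Leavitt path algebras (see \cite{AAS}): simplicity of $L_{K}(E)$ forces $E$ to satisfy Condition (L), i.e. every cycle of $E$ has an exit. Combined with the hypotheses that $L$ has countable $K$-dimension and that $Soc(L)=0$, this places us exactly in the situation of Theorem \ref{Countable dimesion}, which yields at least $2^{\aleph_{0}}$ pairwise non-isomorphic simple left $L$-modules.

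If $Soc(L)\neq 0$, then $Soc(L)=L$, so in particular $E$ has a line point $v$, and (as recalled in the Preliminaries, following \cite{AMMS}) $Lv$ is a simple left $L$-module. I claim that every simple left $L$-module $S$ is isomorphic to $Lv$, so that there is exactly one isomorphism class. First, $S$ is faithful: its annihilator is a two-sided ideal of the simple ring $L$, and it is proper since $LS=S\neq 0$ (using that $L_{K}(E)$ has local units), hence the annihilator is $0$. Next, the two-sided ideal generated by $v$ is all of $L$, so $vS\neq 0$: indeed if $vS=0$ then $v(LS)\subseteq vS=0$, hence $LvL\cdot S=Lv(LS)=0$, forcing $LS=0$, a contradiction. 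Choose $s\in S$ with $vs\neq 0$ and consider the $L$-module homomorphism $\theta:Lv\rightarrow S$, $a\mapsto as$. Since $v\in Lv$ and $\theta(v)=v\cdot(vs)=vs\neq 0$, the map $\theta$ is nonzero; as $Lv$ and $S$ are both simple, $\theta$ is an isomorphism. Thus all simple left $L$-modules coincide up to isomorphism in this case, giving exactly $1$ isomorphism class.

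Putting the two cases together proves the corollary. The routine calculations are minimal; the substantive inputs are the two quoted facts (simplicity of $L_{K}(E)$ implies Condition (L); a line point $v$ makes $Lv$ a simple left ideal), and I expect the only delicate point to be the bookkeeping in the second case showing that a simple ring with nonzero socle admits a unique simple module — and even that is elementary once one exploits that $L_{K}(E)$ has local units.
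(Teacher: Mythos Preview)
Your proof is correct and follows the same two-case split on $Soc(L)$ that the paper uses. The only difference is cosmetic: in the case $Soc(L)\neq 0$ the paper simply asserts that $L=Soc(L)$ forces all simple left $L$-modules to be isomorphic, whereas you spell out the standard argument (faithfulness of $S$, $LvL=L$ forces $vS\neq 0$, then a nonzero map $Lv\to S$ is an isomorphism); your citation of \cite{AAS} in place of the paper's \cite{AA} for Condition~(L) is immaterial.
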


\begin{proof}
If $Soc(L)\neq0$, then, by simplicity, $L=Soc(L)$ and all the simple left
$L$-modules are isomorphic. Suppose $Soc(L)=0$, then the simplicity of $L$
implies that the graph $E$ satisfies Condition (L) (see \cite{AA}). We then
obtain the desired conclusion from Theorem \ref{Countable dimesion}.
\end{proof}

\begin{remark}
The method of proof of Theorem \ref{Countable dimesion} breaks down if $E$ is
an uncountable graph. Because, unlike the case of a countable atomless Boolean
ring, the set of maximal ideals of an uncountable atomless Boolean ring $B$
may not have the desired larger cardinality than $|B|$, unless some conditions
such as completeness of $B$ holds, or if $B$ has an independent subset of
cardinality $|B|$ (I am grateful to Professor Stefan Geschke for this remark.
See \cite{K} for details). When $E$ is an uncountable graph, the Boolean ring
$B$ that we constructed in the proof of Theorem \ref{Countable dimesion} need
not be complete and also need not have a large enough independent subset.
\end{remark}

\section{Finitely presented simple modules}

Let $E$ be a finite graph. It was shown in \cite{AR1} that every simple left
$L_{K}(E)$-module is finitely presented if and only if distinct cycles in $E$
are disjoint, that is, they have no common vertex. Interestingly, in
\cite{AAJZ-1}, this same condition on the graph $E$ is shown to be equivalent
to the algebra $L_{K}(E)$ having finite Gelfand-Kirillov dimension. Further,
Theorem 1 of \cite{AAJZ-2} shows that if the graph $E$ has the stated
property, then $L=L_{K}(E)$ is the union of a finite ascending chain of
ideals
\[
0\subset I_{0}\subset I_{1}\subset\cdot\cdot\cdot\subset I_{m}=L
\]
where $I_{0}$ is a direct sum of finitely many matrix rings $M_{n}(K)$ over
$K$ with $n\in%
\mathbb{N}
\cup\{\infty\}$ and, for $j\geq1$, each successive quotient $I_{j}/I_{j-1}$ is
a direct sum of finitely many matrix rings $M_{n}(K[x,x^{-1}])$ over
$K[x,x^{-1}]$ with $n\in%
\mathbb{N}
\cup\{\infty\}$. In this section, we show that the converse of the above
statement holds and obtain an improved version of the statement and proof of
Theorem 1 of \cite{AAJZ-2} (see Theorem \ref{No intersecting cycles in E}
below). An easy proof of Theorem 2 \ of \cite{AAJZ-2} is also pointed out.

We begin with the following easily derivable Lemma which was implicit in
\cite{AA} and was proved in \cite{CO}.

\begin{lemma}
\label{vertex on cycle}Let $E$ be any graph and let $H$ be a hereditary subset
of vertices in $E$. If $w$ is the base of a closed path and if $w\in\bar{H}$
the saturated closure of $H$, then $w\in H$.
\end{lemma}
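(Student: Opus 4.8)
The plan is to argue by induction on the standard stratification of the saturated closure. Recall that $\bar{H}$ can be constructed as $\bar{H}=\bigcup_{n\geq 0}\Lambda_{n}$, where $\Lambda_{0}=H$ and, for $n\geq 0$,
\[
\Lambda_{n+1}=\Lambda_{n}\cup\{v\in E^{0}:v\text{ is a regular vertex and }r(s^{-1}(v))\subseteq\Lambda_{n}\}.
\]
Because every regular vertex emits only finitely many edges, this union is saturated, and it is the smallest saturated set containing $H$. Moreover, each $\Lambda_{n}$ is hereditary: $\Lambda_{0}=H$ is hereditary by hypothesis, and if $\Lambda_{n}$ is hereditary then so is $\Lambda_{n+1}$, since any $w\geq u$ with $w$ a newly added regular vertex has $u$ lying in $T(r(e))$ for some $e\in s^{-1}(w)$, whence $u\in\Lambda_{n}$.

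Now suppose $w$ is the base of a closed path $\mu=e_{1}e_{2}\cdots e_{m}$, so that $s(e_{1})=w=r(e_{m})$, and suppose $w\in\bar{H}$. Choose $n$ minimal with $w\in\Lambda_{n}$. If $n=0$ then $w\in H$ and we are done, so assume $n\geq 1$. By minimality $w\notin\Lambda_{n-1}$, so by the construction $w$ must be a regular vertex with $r(s^{-1}(w))\subseteq\Lambda_{n-1}$. Since $e_{1}\in s^{-1}(w)$, this gives $r(e_{1})\in\Lambda_{n-1}$.

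The final step is to walk along $\mu$ using the hereditariness of $\Lambda_{n-1}$. From $r(e_{1})=s(e_{2})\in\Lambda_{n-1}$ we obtain $r(e_{2})\in\Lambda_{n-1}$, and iterating, $r(e_{i})\in\Lambda_{n-1}$ for all $i=1,\dots,m$; in particular $w=r(e_{m})\in\Lambda_{n-1}$, contradicting the minimality of $n$. Hence $n=0$ and $w\in H$. I do not expect any real obstacle here beyond pinning down the stratification of $\bar{H}$ correctly; the only point needing a little care is that the walk along the closed path stays within the single stage $\Lambda_{n-1}$, which is exactly the place where hereditariness (rather than saturation) is invoked.
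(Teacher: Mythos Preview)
Your argument is correct. The stratification $\bar{H}=\bigcup_{n\geq 0}\Lambda_{n}$ is the standard one, the inductive verification that each $\Lambda_{n}$ is hereditary is sound, and the minimality contradiction goes through exactly as you say. In fact, once you have $r(e_{1})\in\Lambda_{n-1}$ you can shortcut the ``walk'' by simply observing that $r(e_{1})\geq w$ via the path $e_{2}\cdots e_{m}$, so hereditariness of $\Lambda_{n-1}$ gives $w\in\Lambda_{n-1}$ in one stroke.

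As for comparison: the paper does not actually prove this lemma. It is stated as ``easily derivable,'' noted to be implicit in \cite{AA}, and attributed to \cite{CO} for a proof. Your argument therefore supplies exactly the elementary verification the paper chose to omit, and it is the natural one.
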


In addition to proving the converse of Theorem 1 of \cite{AAJZ-2}, the next
theorem consolidates the various properties of the algebra $L_{K}(E)$ where
the graph $E$ has the mentioned property.

\begin{theorem}
\label{No intersecting cycles in E}Let $E$ be a finite graph and let $K$ be
any field. Then the following are equivalent for the Leavitt path algebra
$L=L_{K}(E)$:

(i) \ No two distinct cycles in $E$ have a common vertex;

(ii) \ Every simple left $L$-module is finitely presented;

(iii) $L$ has finite Gelfand-Kirillov dimension;

(iv) $L$ is the union of a finite ascending chain of graded ideals
\[
0\subset I_{0}\subset I_{1}\subset\cdot\cdot\cdot\subset I_{m}=L\qquad
\qquad\qquad\qquad(\ast)
\]
with $H_{j}=I_{j}\cap E^{0}$, where $I_{0}=Soc(L)$ and, for each $j\geq0$,
identifying, $L/I_{j}$ with $L_{K}(E\backslash H_{j})$, $I_{j+1}/I_{j}$ is the
ideal generated by the vertices in all the cycles without exits in
$E\backslash H_{j}$ and $Soc(L/I_{j})=0$.

(v) $L$ is the union of a finite ascending chain of graded ideals
\[
0\subset I_{0}\subset I_{1}\subset\cdot\cdot\cdot\subset I_{m}=L
\]
where $I_{0}$ is a direct sum of finitely many matrix rings of the form
$M_{n}(K)$ where $n\in%
\mathbb{N}
\cup\{\infty\}$ and for each $j>1$, $I_{j}/I_{j-1}$ is a direct sum of
finitely many matrix rings of the form $M_{n}(K[x,x^{-1}])$ where $n\in%
\mathbb{N}
\cup\{\infty\}$.

(vi) $E^{0}$ is the union of a finite ascending chain of hereditary saturated
subsets%
\[
H_{0}\subset\cdot\cdot\cdot\subset H_{m}=E^{0}%
\]
where $H_{0}$ is the hereditary saturated closure of all the line points in
$E$ and, for each $j\geq0$, $E\backslash H_{j}$ has no line points and
$H_{j+1}\backslash H_{j}$ is the hereditary saturated closure of the set of
vertices in all the cycles without exits in the graph $E\backslash H_{j}$.
\end{theorem}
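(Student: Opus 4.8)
The plan is to prove the cycle of implications (i) $\Rightarrow$ (vi) $\Rightarrow$ (iv) $\Rightarrow$ (v) $\Rightarrow$ (iii) and (v) $\Rightarrow$ (ii) $\Rightarrow$ (i), or a similar closed loop that keeps each individual step short. Several of the arrows are already in the literature: (i) $\Leftrightarrow$ (ii) is Theorem of \cite{AR1}, (i) $\Leftrightarrow$ (iii) is from \cite{AAJZ-1}, and (i) $\Rightarrow$ (v) is essentially Theorem 1 of \cite{AAJZ-2}. So the genuinely new content is the converse direction together with the refined description in (iv) and (vi); the cleanest route is to show (i) $\Rightarrow$ (vi) $\Rightarrow$ (iv) $\Rightarrow$ (v) directly, and then close the loop by showing (v) $\Rightarrow$ (i) (or (v) $\Rightarrow$ (iii) $\Rightarrow$ (i), invoking \cite{AAJZ-1} for the last arrow).

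First I would prove (i) $\Rightarrow$ (vi). Let $H_0$ be the hereditary saturated closure of the set of line points of $E$; since $E$ is finite this is a finite process. In $E\backslash H_0$ there are no line points (line points of $E\backslash H_0$ would pull back to line points of $E$, hence lie in $H_0$). Now use the standard structural fact that in a finite graph with no line points, every sink-free/exit configuration forces the existence of a cycle without exit: more precisely, since $E\backslash H_0$ has finitely many vertices and no line points, and (by (i)) distinct cycles are disjoint, there is at least one cycle without an exit in $E\backslash H_0$ unless $E\backslash H_0$ is empty. Let $H_1\backslash H_0$ be the hereditary saturated closure of the vertices on all cycles without exit in $E\backslash H_0$, and iterate. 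Finiteness of $E^0$ guarantees the chain $H_0\subset H_1\subset\cdots$ stabilizes at $E^0$ in finitely many steps; the only thing to check carefully is that the process does not stall, i.e., that whenever $E\backslash H_j\neq\emptyset$ it contains a cycle without exit — this is where Lemma \ref{vertex on cycle} and the disjoint-cycles hypothesis (i) do the work, since disjointness prevents a cycle from having an exit that leads into another cycle while hereditary-saturated-closure arguments clear away everything below.

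Next, (vi) $\Rightarrow$ (iv): set $I_j = I_{(H_j, \varnothing)}$, the graded ideal generated by $H_j$ (no breaking vertices since $E$ is finite). By \cite{T}, $L/I_j \cong L_K(E\backslash H_j)$, and under this identification $I_{j+1}/I_j$ is the ideal generated by the vertices on the exit-free cycles of $E\backslash H_j$. That $I_0 = Soc(L)$ follows from the characterization of the socle as the ideal generated by line points (from \cite{AMMS}), and $Soc(L/I_j)=0$ because $E\backslash H_j$ has no line points. Then (iv) $\Rightarrow$ (v): each successive quotient $I_{j+1}/I_j$ is, as an algebra, the Leavitt path algebra of the subgraph of $E\backslash H_j$ on the exit-free cycles together with trees feeding them — but an exit-free cycle of length $n$ (with the trees hanging on it, which after passing to the relevant ideal contribute matrix units) yields $M_k(K[x,x^{-1}])$ for an appropriate $k\in\mathbb N\cup\{\infty\}$, by the well-known computation of $L_K$ of a single cycle with no exit; disjointness of cycles makes this a finite direct sum. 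Similarly $I_0 = Soc(L)$ is a direct sum of matrix rings $M_n(K)$ over $K$ by the socle description. Finally, (v) $\Rightarrow$ (iii) is a GK-dimension computation (each layer contributes GK-dimension $\le 1$, finitely many layers), and (iii) $\Rightarrow$ (i) is exactly \cite{AAJZ-1}; alternatively (v) $\Rightarrow$ (i) can be argued directly by noting that two intersecting cycles would force some subquotient to contain a non-PI, non-matrix piece. The step I expect to be the main obstacle is (i) $\Rightarrow$ (vi), specifically the verification that the ascending chain genuinely exhausts $E^0$ — i.e., that as long as vertices remain, a cycle without exit is available to generate the next layer; this requires combining the finiteness of $E$, Lemma \ref{vertex on cycle}, and a careful induction on the "depth" of vertices relative to the cycles, and it is the only place where the disjoint-cycles hypothesis is used in an essential, non-routine way.
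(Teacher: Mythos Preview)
Your plan is close to the paper's, but two points deserve correction.

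First, the role of Lemma \ref{vertex on cycle}. You invoke it in (i) $\Rightarrow$ (vi) to guarantee that $E\backslash H_j$ contains a cycle without exit whenever it is nonempty. The paper does not use the lemma here; instead it argues directly: since $E\backslash H_j$ has no line points and is finite, every vertex connects to a cycle, and on the (finitely many) cycles in $E\backslash H_j$ one sets $c \ge c'$ if some path runs from a vertex of $c$ to a vertex of $c'$. Disjointness of cycles makes $\ge$ antisymmetric, hence a genuine partial order, and any minimal cycle has no exit. That is where hypothesis (i) does its essential work; Lemma \ref{vertex on cycle} is not needed for this step.

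Second, closing the loop. Your proposed route (v) $\Rightarrow$ (iii) via ``each layer has GK-dimension $\le 1$, finitely many layers'' is not routine: GK-dimension does not behave so simply under extensions, and the layers here may involve $M_{\infty}(K[x,x^{-1}])$, so you would effectively be re-proving the hard direction of \cite{AAJZ-1}. The alternative direct (v) $\Rightarrow$ (i) you sketch is likewise too vague. The paper closes the cycle by proving (vi) $\Rightarrow$ (i), and \emph{this} is where Lemma \ref{vertex on cycle} actually enters: if some $w$ lies on two distinct cycles, choose the least $t$ with $w\in H_{t+1}\backslash H_t$; since $H_{t+1}\backslash H_t$ is the saturated closure (in $E\backslash H_t$) of the set $S_t$ of vertices on exit-free cycles and $w$ lies on a closed path, Lemma \ref{vertex on cycle} forces $w\in S_t$ --- impossible, because each of the two cycles through $w$ exits via the other.

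With these two adjustments (relocate the use of Lemma \ref{vertex on cycle}, and close via (vi) $\Rightarrow$ (i) instead of (v) $\Rightarrow$ (iii)), your chain (i) $\Rightarrow$ (vi) $\Rightarrow$ (iv) $\Rightarrow$ (v), together with the citations for (i) $\Leftrightarrow$ (ii) and (i) $\Leftrightarrow$ (iii), coincides with the paper's argument; whether one establishes (vi) before (iv) or, as the paper does, (iv) first with (vi) read off from the same construction, is purely cosmetic.
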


\begin{proof}
The equivalence of (i) and (ii) was proved in \cite{AR1} and that of (i) and
(iii) was proved in \cite{AAJZ-1}.

Now (i) =%
$>$
(iv) follows from the proof of Theorem 1 of \cite{AAJZ-2}. We give a slightly
different streamlined proof. Let $I_{0}=Soc(L)$, so $I_{0}$ is the ideal
generated by all the line points in $E$ (\cite{AMMS}). Now, for any graded
ideal $J$ containing $Soc(L)$, $Soc(L/J)=0$. This is because, as the
hereditary saturated set $J\cap E^{0}=H$ contains all the line points in $E$,
the finiteness of $E$ implies that the quotient graph $E\backslash H$ contains
no sinks and hence no line points. Suppose for $n\geq0$ we have defined the
graded ideal $I_{n}\supseteqq I_{0}$. Let $H_{n}=I_{n}\cap E^{0}$. Then
$E\backslash H_{n}$ satisfies the same hypothesis as $E$ and has no sinks, so
that every vertex in it connects to a cycle. Moreover, we claim that
$E\backslash H_{n}$ contains cycles without exits. To see this, for any given
two cycles $c,c^{\prime}$ in $E\backslash H_{n}$, define $c\geq c^{\prime}$ if
there is a path from a vertex in $c$ to a vertex in $c^{\prime}$. Since no two
cycles in $E\backslash H_{n}$ have a common vertex, $\geq$ \ is antisymmetric
and hence a partial order. Clearly every cycle which is minimal in this
partial order has no exits in $E\backslash H_{n}$. Now $L/I_{n}\cong
L_{K}(E\backslash H_{n})$ and $Soc(L/I_{n})=0$. Define $I_{n+1}/I_{n}$ to be
the ideal generated by the vertices in all the cycles without exits in
$E\backslash H_{n}$. It is clear that $I_{n+1}$ is a graded ideal of $L$. By
induction on $n$, after a finite number of steps, we then obtain the chain
$(\ast)$ with the desired properties.

(iv) =%
$>$
(v) By Theorem 5.6 of \cite{AMMS}, $I_{0}$ is a direct sum of finitely many
matrix rings of the form $M_{n}(K)$ where $n\in%
\mathbb{N}
\cup\{\infty\}$ and, for each $j\geq0$, $I_{j+1}/I_{j}$ is, by Proposition 3.7
of \cite{AAPS}, a direct sum of finitely many matrix rings of the form
$M_{n}(K[x,x^{-1}])$ where $n\in%
\mathbb{N}
\cup\{\infty\}$.

(v) =%
$>$
(vi). \ Obvious from the proof of (iii) =%
$>$
(iv).

Assume (vi). Suppose, by way of contradiction, there is a vertex $w$ which is
the base of two distinct cycles $g,h$. Now $w\notin H_{0}$ since otherwise, by
Lemma \ref{vertex on cycle}, $w$ will be a line point in $E$, a contradiction.
Let $t\geq0$ be the smallest integer such that $w\notin H_{t}$, so $w\in
H_{t+1}\backslash H_{t}$. Now $H_{t+1}\backslash H_{t}$ is the saturated
closure of the set $S_{t}$ of all the vertices on cycles without exits in the
quotient graph $E\backslash H_{t}$. Then, by Lemma \ref{vertex on cycle},
$w\in S_{t}$, a contradiction. This proves (i).
\end{proof}

Following the ideas in \cite{AR1}, we illustrate Theorem
\ref{No intersecting cycles in E} by the simplest example of the Toeplitz algebra.

\begin{example}
Let $E$ be the graph with two vertices $v,w$, an edge $f$ with $s(f)=v,r(f)=w$
and a loop $c$ with $s(c)=v=r(c)$. Since $w$ is the only line point, the socle
of $L=L_{K}(E)$ is $S=<w>$ (see \cite{AMMS}) and there is an epimorphism
$L\longrightarrow K[x,x^{-1}]$ with kernel $S$ mapping $v$ to $1$, $c$ to $x$
and $c^{\ast}$ to $x^{-1}$. Thus $L/S\cong K[x,x^{-1}]$ and, moreover,
$S=Lw\oplus%
{\textstyle\bigoplus\limits_{n=0}^{\infty}}
Lwf^{\ast}(c^{\ast})^{n}$ is the direct sum of simple left ideals in $L$. We
wish to show that every simple left $L$-module $A=L/M$ is cyclically (hence
finitely) presented, where $M$ is a maximal left ideal of $L$.

If $S\nsubseteqq M$, then we have a direct decomposition $S=(S\cap M)\oplus T$
and $L=M\oplus T$. If $1=\epsilon+\epsilon^{\prime}$ with $\epsilon$ in $M$
and $\epsilon^{\prime}\in T$, then $M=L\epsilon$ is cyclic.

Suppose $S\subseteqq M$. Then there is an irreducible polynomial
$p(x)=1+k_{1}x+\cdot\cdot\cdot+k_{m}x^{m}\in K[x,x^{-1}]$ such that
$M/S=<p(x)>$. So $M=Lp(c)+S=Lp(c)+Lw+%
{\textstyle\bigoplus\limits_{n=0}^{\infty}}
Lwf^{\ast}(c^{\ast})^{n}=Lp(c)+%
{\textstyle\bigoplus\limits_{n=0}^{\infty}}
Lf^{\ast}(c^{\ast})^{n}$ as $wp(c)=w$. Let $N=%
{\textstyle\bigoplus\limits_{i=0}^{m-1}}
Lf^{\ast}(c^{\ast})^{i}$. Suppose $r\geq m-1$ and that $f^{\ast}(c^{\ast}%
)^{t}\in Lp(c)+N$ for all $t\leq r$. Then, $f^{\ast}(c^{\ast})^{r+1}=f^{\ast
}(c^{\ast})^{r+1}p(c)-k_{1}f^{\ast}(c^{\ast})^{r}-\cdot\cdot\cdot-k_{m}%
f^{\ast}(c^{\ast})^{r+1-m}\in Lp(c)+N$. Thus we conclude that
$Lp(c)+S=Lp(c)+N$. Observing that $\{f^{\ast}(c^{\ast})^{i}:i=0,\cdot
\cdot\cdot,m-1\}$is a set of mutually orthogonal elements, we get $N=%
{\textstyle\bigoplus\limits_{i=0}^{n-1}}
Lf^{\ast}(c^{\ast})^{i}=Lb$ where $b=f^{\ast}+f^{\ast}c^{\ast}+\cdot\cdot
\cdot+f^{\ast}(c^{\ast})^{n-1}$. Further, $p(c)f^{\ast}(c^{\ast})^{i}=0$ for
all $i$ and that $p(c)=p(c)v\in Lv$. Consequently, $M=Lp(c)+S=L(v+b)$ is
cyclic. This proves that the simple module $L/M$ is cyclically presented.
\end{example}

REMARK: Observe that, in our proof above, we never used the fact that the
polynomial $p(x)$ is irreducible. Since $K[x,x^{-1}]$ is a principal ideal
domain, the same argument shows that every left ideal $A\varsupsetneqq S$ in
$L$ is a principal left ideal. Also, if $A$ is a left ideal such that
$S\nsubseteqq A$ and $A\nsubseteqq S$, then decomposing $S=(S\cap A)\oplus T$,
we see that the left ideal $A+S=A\oplus T\varsupsetneqq S$. Thus $A\oplus T$
and hence $A$ is a principal left ideal in $L$. On the other hand, if
$A\subseteqq S$, $A$ need not be a principal left \ ideal. This is clear if
$A=S$, as $S$ is a direct sum of infinitely many simple left ideals. In
particular, $S$ is not a direct summand of $L$. Thus we obtain an easy proof
of the following proposition which occurs as Theorem 2 in \cite{AAJZ-2}.

\begin{proposition}
Let $E$ be a graph with two vertices $v,w$ and two edges $c,f$ with
$s(c)=v=r(c),s(f)=v,r(f)=w$. If $S=<w>$ is the two-sided ideal generated by
$w$, then $S$ cannot be a direct summand of $L=L_{K}(E)$ as a left $L$-module.
\end{proposition}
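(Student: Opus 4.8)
The plan is to argue by contradiction, using the fact that $S$ is an infinite direct sum of nonzero left ideals together with the fact that any direct summand of a cyclic module over a unital ring is again cyclic. So the first thing I would record is that, since $E^{0}=\{v,w\}$ is finite, the algebra $L=L_{K}(E)$ is unital with $1=v+w$; hence ${}_{L}L$ is a cyclic left module.

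Next I would observe that if $L=S\oplus T$ as left $L$-modules, then the projection $\pi\colon L\rightarrow S$ is a left $L$-module homomorphism, so (for left modules over a ring with $1$) it is right multiplication by $e:=\pi(1)$, and in particular $S=\pi(L)=\pi(L\cdot 1)=L\pi(1)=Le$. Thus $S$ would be a cyclic left $L$-module (one may take $e$ idempotent, but only cyclicity is used below). This is the structural step that turns the ``direct summand'' hypothesis into something concrete, and it is exactly the place where the presence of the identity $1=v+w$ in $L$ is essential.

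Then I would invoke the decomposition already established in the Example preceding this proposition, namely $S=\langle w\rangle=Lw\oplus\bigoplus_{n=0}^{\infty}Lwf^{\ast}(c^{\ast})^{n}$, which displays $S$ as a direct sum of infinitely many nonzero (indeed simple) left ideals of $L$. Writing $S=Le$ as above, the generator $e$ lies in the sum of only finitely many of these summands, say $e\in S_{N}:=Lw\oplus\bigoplus_{n=0}^{N}Lwf^{\ast}(c^{\ast})^{n}$ for some $N$. Since $S_{N}$, being a finite sum of left ideals, is itself a left ideal of $L$, we get $S=Le\subseteq S_{N}$. But $S_{N}$ is a proper submodule of $S$, because the summand $Lwf^{\ast}(c^{\ast})^{N+1}$ is nonzero and the sum is direct. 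This contradiction shows that $S$ cannot be a direct summand of ${}_{L}L$.

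The argument is short, and its only real content is the elementary observation that a cyclic module cannot decompose as an infinite direct sum of nonzero submodules; the substantive work --- identifying $\langle w\rangle$ with $Soc(L)$ and producing its explicit infinite internal direct-sum decomposition --- has already been done in the Example. Accordingly the only point requiring care is the reduction ``direct summand $\Rightarrow$ cyclic,'' which uses that $L$ has a multiplicative identity; if one wished to state the result for infinite graphs (where $L$ may be non-unital) one would instead work with local units $e=v+w$ on the relevant subalgebra, but for the finite graph at hand the unital argument suffices.
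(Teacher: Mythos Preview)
Your proof is correct and follows essentially the same approach as the paper: the paper's argument (given in the Remark immediately preceding the proposition) is precisely that $S$, being a direct sum of infinitely many simple left ideals, cannot be a principal left ideal, and hence cannot be a direct summand of $L$. You have simply made explicit the elementary step ``direct summand of a cyclic module is cyclic'' that the paper leaves implicit.
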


\end{document}